% Version July 11, 2025

\documentclass[final,12pt,3p]{elsarticle}

% ===================================================================

\usepackage{graphicx}
\usepackage{amssymb}
\usepackage{amsthm}
\usepackage{amsmath}
\usepackage{algorithm}
\usepackage{algorithmic}
\usepackage{tikz}
\usepackage{lineno}
\usepackage{hyperref}

\usepackage{caption}
\usetikzlibrary{shapes}
\usepackage{subcaption}
\usepackage{geometry}
\usepackage{mathrsfs} 
\usepackage{array}
\usepackage{multirow}
\usepackage{booktabs} 
\usepackage{adjustbox}
\geometry{vmargin=2.3cm, hmargin=2cm}
\usepackage[title]{appendix}

\biboptions{sort&compress}
\hypersetup{
colorlinks=true,
linkcolor=red,
filecolor=green,
urlcolor=green,
anchorcolor=green,
citecolor=cyan,
pdftitle={Overleaf Example},
pdfpagemode=FullScreen,
}
\urlstyle{same}

%%%%%%%%%%%%%%%%%%
\newtheorem{theorem}{Theorem}
\newtheorem{lemma}{Lemma}

\newtheorem{corollary}{Corollary}
\newtheorem{proposition}{Proposition}

%%%%%%%%%%%% for commenting
 \usepackage{xcolor}
%%%%%%%%%%%%%%%%%%%%%%%%%%%%%%%%%%%%%%
% please place your own definitions here and don't use \def but  \newcommand{}{}
% \def\new#1{{\color{red}{#1}}}
% \newcommand{\new}[1]{{\color{red}{#1}}}
%%% later redefine it by
% \def\new#1{{{#1}}}
% \newcommand{\new}[1]{{{#1}}}

% TODO Package comment out finally
 \usepackage[figcolor=white]{todonotes}

%%%%%%%%%%%%%%%%%%%%%%%%%%%%%%%%%

\begin{document}
	
\begin{tikzpicture}[remember picture,overlay]
\node[anchor=north east,inner sep=20pt] at (current page.north east)
{\includegraphics[scale=0.2]{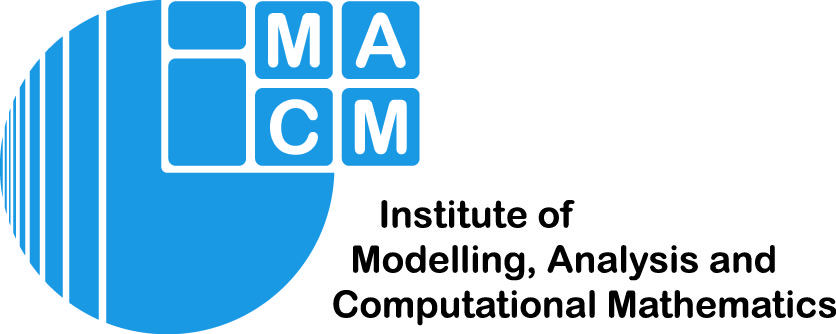}};
\end{tikzpicture}

\begin{frontmatter}

% \title{A PDE-Based SEIQR Epidemiological Model with Vaccination, Treatment, Social Distancing, and Quarantine Strategies}
\title{Spatiotemporal SEIQR Epidemic Modeling with Optimal Control\\ for Vaccination, Treatment, and Social Measures}
%\title{Optimal Control of a spatiotemporal SEIQR Model with Vaccination, Quarantine, and Social Distancing}

\author[BUW,MAIS]{Achraf Zinihi}
\ead{a.zinihi@edu.umi.ac.ma} 
% \ead{achrafzinihi99@gmail.com}

\author[BUW]{Matthias Ehrhardt\corref{Corr}}
\cortext[Corr]{Corresponding author}
\ead{ehrhardt@uni-wuppertal.de}

\author[MAIS]{Moulay Rchid Sidi Ammi}
\ead{rachidsidiammi@yahoo.fr}

\address[BUW]{University of Wuppertal, Applied and Computational Mathematics,\\
Gaußstrasse 20, 42119 Wuppertal, Germany}

\address[MAIS]{Department of Mathematics, AMNEA Group, Faculty of Sciences and Techniques,\\
Moulay Ismail University of Meknes, Errachidia 52000, Morocco}

% \address[BUW]{University of Wuppertal, Chair of Applied and Computational Mathematics,\\
% Gaußstrasse 20, 42119 Wuppertal, Germany}

% ===================================================================

\begin{abstract}
This paper introduces a spatiotemporal SEIQR epidemic model governed by a system of reaction-diffusion partial differential equations that incorporates optimal control strategies. The model captures the transmission dynamics of an infectious disease across space and time. It includes three time-dependent control variables: preventive measures for susceptible individuals, quarantine for infectious individuals, and treatment for quarantined individuals. The study has four main objectives: (i) to prove the existence, uniqueness, and positivity of global strong solutions using analytic semigroup theory, (ii) to demonstrate the existence of optimal control strategies through functional analysis techniques, (iii) to derive first-order necessary optimality conditions via convex perturbation methods and adjoint equations, and (iv) to perform numerical simulations to assess the effectiveness of different combinations of control interventions. The simulation results emphasize the advantages of combining pharmaceutical and non-pharmaceutical interventions to minimize disease prevalence and control-related expenses.
%%%%%%%%%% OLD ABSTRACT
%In this paper, we propose a reaction-diffusion SEIQR model to describe the spatiotemporal transmission dynamics of an infectious disease when subject to optimal control strategies.
%The model incorporates preventive measures for susceptible individuals and quarantine and treatment strategies for infected and quarantined individuals, respectively.
%Our study focuses primarily on analyzing and implementing optimal control strategies to mitigate disease spread. Our research focuses on four main aspects.
%First, we prove the existence and uniqueness of a global positive strong solution for the controlled system by applying the theory of analytic semigroups.
%Second, we use techniques from functional analysis to establish the existence of optimal controls.
%Third, we derive first-order necessary optimality conditions using the convex perturbation method. Lastly, we present numerical simulations to illustrate the impact and feasibility of the proposed control strategies.
\end{abstract}

\begin{keyword}
%Epidemiology \sep Optimal control \sep Spatiotemporal dynamics \sep Simulations.
Epidemic modeling \sep Reaction-diffusion systems \sep SEIQR model \sep Optimal control theory \sep Spatiotemporal dynamics \sep Public health interventions \sep Numerical simulations.

\textit{2020 Mathematics Subject Classification:} 92C60, 49N90, 37B02, 33F05.
\end{keyword}

%%%%%%%%%%%%%%%%%%%%%% Journal name
\journal{} % for preprint
%\journal{Mathematics and Computers in Simulation}

% some ideas....
% Annali di Matematica Pura ed Applicata
% Journal of Mathematical Biology
% Advances in Computational Mathematics
% Mathematical Modelling and Numerical Simulation with Applications

%         Forum Mathematicum
%         Communications in Mathematical Sciences
% Bulletin of Mathematical Biology

% ================================================================ 

\end{frontmatter}

% \tableofcontents

% ============================================================ Intro
\section{Introduction}\label{S1}
Mathematical modeling of infectious diseases has played a vital role in understanding epidemic dynamics and informing public health strategies 
\cite{Bandekar2022, MohammedAwel2020, Kim2006, Ahmad2022, Zinihi2025MM}.
Classical compartmental models, such as the SIR and SEIR frameworks \cite{Korobeinikov2008, Kuznetsov1994, KhudaBukhsh2024, zinihi2025OC}, 
first introduced in the early 20th century by Kermack and McKendrick \cite{Kermack1927}, 
have provided foundational insights into the mechanisms underlying disease spread.
Over time, these models have evolved to include more refined structures that capture the complexity of real-world epidemics, such as latency periods, temporary immunity, asymptomatic transmission, and spatial heterogeneity \cite{Balderrama2022, Qiu2024, Zinihi2025FDE}.

Public health emergencies such as the ongoing SARS-CoV-2 pandemic, Ebola outbreaks, and the reemergence of diseases like measles and tuberculosis have recently sparked renewed interest in creating more realistic, spatially explicit epidemic models, cf.\
\cite{Bandekar2022, MohammedAwel2020, Zinihi2025MM, Korobeinikov2008, KhudaBukhsh2024, zinihi2025OC, Balderrama2022, Qiu2024, Zinihi2025FDE}.
Reaction-diffusion systems have proven to be powerful tools for capturing the temporal progression and spatial dissemination of infectious diseases.
By incorporating diffusion terms into compartmental models, one can account for individual movement and nonuniform infection risk distribution, both of which are essential for regional containment strategies.
The work \cite{Auricchio2023} addressed the well-posedness of a spatiotemporal system arising from an SEIR epidemiological model. 
The system consisted of four nonlinear partial differential equations (PDEs) with diffusion coefficients that depended on the total SEIR population.
The model was designed to capture the spatiotemporal dynamics of the SARS-CoV-2 pandemic.

In \cite{SidiAmmi2022}, the authors investigated a fractional reaction-diffusion version of the classical SIR epidemic model using the non-local, non-singular ABC fractional derivative.
The results highlighted the advantages of using the ABC fractional derivative compared to the classical integer-order formulation and emphasized the importance of choosing the appropriate fractional order to more accurately capture the system’s dynamics.
Wu and Zhao \cite{Wu2018} developed a nonlocal reaction-diffusion model with periodic delays to study the transmission dynamics of vector-borne diseases.
The model incorporated key biological and environmental factors, including spatial heterogeneity, seasonal variation, and the temperature dependence of the extrinsic and intrinsic incubation periods. 
The authors defined a basic reproduction number, $\mathcal{R}_0$, and established a threshold-type result characterizing the global dynamics of the system in terms of $\mathcal{R}_0$.
Another study, conducted by Zinihi et al.\ \cite{Zinihi2025FDE}, analyzed a fractional parabolic SIR epidemic model that incorporated the nonlocal Caputo derivative and the nonlinear spatial $p$-Laplacian operator. 
Immunity was introduced through a vaccination program treated as a control variable. 
The objective was to determine an optimal control pair that minimizes the number of infected individuals and the associated costs of vaccination and treatment within a bounded spatiotemporal domain.
Zinihi et al.\ \cite{Zinihi2025GS} investigated the global stability of a class of nonlinear parabolic equations, focusing particularly on applications in biology, especially epidemiology. Their analysis was based on constructing Lyapunov functions from the corresponding ordinary differential equations (ODEs).
To illustrate the methodology, the authors provided a representative example from epidemiology.

Of the many extensions of classical models, the SEIQR structure has emerged as a particularly relevant one in modern epidemiological studies.
The model divides the population into five categories: susceptible ($S$), exposed ($E$), infectious ($I$), quarantined ($Q$), and recovered ($R$).
This classification system is well-suited for capturing the delayed onset of infectiousness, targeted quarantine interventions, 
and the gradual return of individuals to the susceptible or recovered classes.
For example, Bhadauria, Devi and Gupta \cite{Bhadauria2021} proposed and analyzed an SEIQR epidemic model 
that incorporates a delay to account for the time between exposure and the onset of infection.
They examined the model using Lyapunov stability theory to assess local and global stability and performed 
Hopf bifurcation analysis to explore the emergence of periodic solutions.
The study highlighted the significant role of asymptomatic cases arising from the exposed population in accelerating the spread of SARS-CoV-2.
Srivastava and Nilam \cite{Srivastava2024} proposed a fractional SEIQR epidemic model that incorporates the Monod-Haldane incidence rate
to capture psychological effects and a saturated quarantine response modeled by a Holling type III functional form.
The system dynamics were governed by Caputo fractional derivatives to account for memory effects in disease transmission.
Separately, Huang \cite{Huang2016} introduced a novel optimization algorithm inspired by the SEIQR model, termed the SEIQRA (SEIQR Algorithm).
This bio-inspired approach simulates the spread of an infectious disease (e.g., SARS) within a population by mapping disease states onto computational agents, each of which is characterized by certain features.
Disease transmission and progression were analogized to state transitions controlled by thirteen specialized operators that performed actions such as averaging, reflection, and crossover.
Although several deterministic SEIQR models have been studied in recent literature, few have addressed the spatial dynamics of these models, especially when public health control measures are in place.

In practice, controlling diseases relies heavily on strategically deploying limited resources through prevention (e.g., vaccination or public awareness campaigns), quarantine, and treatment.
For example, Duan et al.\ \cite{Duan2020} proposed an SIRVS epidemic model that incorporates age-structured vaccination and recovery and accounts for vaccine- and infection-induced immunity. 
The study aimed to minimize costs associated with vaccination and treatment strategies by applying Pontryagin’s maximum principle when $R_0 > 1$. 
Zinihi, Sidi Ammi and Ehrhardt \cite{zinihi2025OC} investigated a fractional SEIR model that incorporates the non-singular Caputo–Fabrizio derivative. 
In this framework, vaccination was introduced as a control variable representing a public health intervention.
The primary objective of the study was to identify an optimal control pair that minimizes the number of infectious individuals and the total costs of vaccination and treatment. 
Another study by Liu, Xiang and Zhou \cite{Liu2024} addressed the dynamic behavior and optimal control of a delayed epidemic model. 
They formulated an optimal vaccination strategy to minimize the number of infected individuals, maximize the number of uninfected individuals, and reduce overall control costs. 
Similarly, Salwahan, Abbas and Tridane \cite{Salwahan2025} examined a modified SITR model governed by a periodically switched system, 
in which the transmission and treatment rates vary periodically. 
Due to the discontinuous dynamics induced by switching, the authors proposed a modified version of Pontryagin’s maximum principle to solve the associated optimal control problem. 
This study aimed to optimize vaccination strategies in dynamic, resource-constrained settings, validating the approach through numerical simulations.
Applying optimal control theory to epidemic models provides a rigorous framework for designing strategies that depend on time and space and minimize the health burden and economic cost of disease management. 
Combined with reaction-diffusion models, this approach enables comprehensive spatiotemporal optimization of control efforts.

Incorporating spatial heterogeneity into epidemic models is essential to capturing the real-world dynamics of disease spread, particularly when population mobility, local outbreaks, and regional disparities are significant factors. 
Using reaction-diffusion terms enables the model to represent spatial propagation driven by individual movement across geographic areas, which is particularly relevant in urban and semi-urban contexts.
Furthermore, managing an epidemic is complex and requires multiple control strategies because no single intervention is sufficient to effectively curb disease transmission. 
In this work, we consider pharmaceutical interventions, such as vaccination and quarantine, as well as non-pharmaceutical interventions, such as social distancing and awareness campaigns, to formulate a more realistic and adaptable control framework. 
Combining these strategies enables a flexible response: pharmaceutical interventions directly reduce the susceptible and infectious populations, while the non-pharmaceutical intervention modifies disease transmission pathways by reducing contact rates. 
This multifaceted control approach reflects the practical challenges health authorities face during epidemics, where decisions must balance limited medical resources with behavioral interventions that target public compliance and awareness.

In this work, we assume that population movement depends on time $t$ and space $x$. 
Furthermore, we denote the densities of the six compartments, which evolve with respect to time and space, by $S(t,x)$, $E(t,x)$, $I(t,x)$, $Q(t,x)$, and $R(t,x)$.
We propose a spatiotemporal SEIQR model formulated as a parabolic system of PDEs that incorporates the Laplacian operator to capture spatial movement. 
The model includes three control variables, each representing a specific public health intervention: prevention among susceptible individuals, quarantine of infected individuals, and treatment of quarantined individuals.

Our study has four main objectives. 
Section~\ref{S2} introduces the spatiotemporal SEIQR model, which is governed by a system of reaction-diffusion equations and incorporates three control variables corresponding to vaccination, quarantine, and treatment. 
Section~\ref{S3} establishes the existence, uniqueness, boundedness, and positivity of the model's solution using semigroup theory and functional analysis. 
Section~\ref{S4} formulates the optimal control problem, proves the existence of optimal strategies, and Section~\ref{S5} derives the first-order necessary conditions for optimality via an adjoint-based variational method. % Pontryagin's maximum principle. 
Section~\ref{S6} presents numerical simulations that illustrate the effectiveness of the proposed interventions and support the theoretical findings.
Finally, in Section~\ref{S7} we conclude and state perspectives for future research directions.

% ================================================================ Section 2
\section{Mathematical Model Formulation}\label{S2}
%In this section, 
We propose a five-dimensional epidemiological model to describe the transmission dynamics of an epidemic.  
In this model, individuals in the population transition through five compartments over time: Susceptible ($S$), Exposed ($E$), Infected ($I$), Quarantined ($Q$), and Recovered ($R$). 
The transmission coefficients used in the SEIQR model are summarized in Table~\ref{Tab1}.

%%%%%%%%%%%%%%%%%%%%%%%% Table 1
\begin{table}[H]
\centering
\setlength{\tabcolsep}{0.8cm}
\caption{Transmission coefficients for the proposed SEIQR reaction-diffusion model.}\label{Tab1}
\adjustbox{max width=\textwidth}{
\begin{tabular}{cc}
\hline 
\textbf{Symbol} & \textbf{Description} \\
\hline \hline 
$\Lambda$ & Recruitment rate (e.g.\ birth or immigration) \\
\hline
\multirow{2}{*}{$\beta_1$} & Transmission rate due to contact between\\ & susceptible and exposed individuals \\
\hline
\multirow{2}{*}{$\beta_2$} & Transmission rate due to contact between\\ & susceptible and infectious individuals \\
\hline
$\mu$ & Natural death rate \\
\hline
$\delta$ & Rate at which exposed individuals become infectious \\
\hline
$\gamma$ & Rate at which infectious individuals are quarantined \\
\hline
$\alpha$ & Recovery rate of quarantined individuals \\
\hline
\multirow{2}{*}{$\rho$} & Rate at which non-infected quarantined individuals\\ & return to the susceptible class \\
\hline
$\lambda_S$, $\lambda_E$, $\lambda_I$, $\lambda_Q$, $\lambda_R$ & Diffusion coefficients for $S$, $E$, $I$, $Q$, and $R$ respectively \\
\hline
\end{tabular}
}
\end{table}

The proposed SEIQR reaction-diffusion model describes the spatiotemporal spread of an infectious disease by modeling the movement and interactions of individuals across five compartments: susceptible ($S$), exposed ($E$), infected ($I$), quarantined ($Q$), and recovered ($R$).\\
%%%%%%%%%%%%%%%%%%%%
\textbf{\textit{Susceptible}} ($S$): Susceptible individuals are at risk of contracting the disease. Their population increases through recruitment $\Lambda$ and re-entry from quarantine at a rate $\rho Q$, representing individuals who tested negative or were misclassified. 
They decrease due to natural mortality ($\mu$) and infection upon contact with exposed ($E$) and infected ($I$) individuals, at rates $\beta_1 S E$ and $\beta_2 S I$, respectively. 
Spatial movement is modeled by the diffusion term $\lambda_S \Delta S$.\\
%%%%%%%%%%%%%%
\textbf{\textit{Exposed}} ($E$): Exposed individuals have been infected but are not yet infectious. This group increases via contact between susceptibles and exposed individuals, governed by the transmission term $\beta_1 S E$. 
Exposed individuals either progress to the infected class at rate $\delta$ or die naturally. 
Spatial diffusion is represented by $\lambda_E \Delta E$.\\
%%%%%%%%%%%%%%
\textbf{\textit{Infected}} ($I$): Infected individuals are symptomatic and capable of transmitting the disease. Their number increases through contact with susceptible individuals ($\beta_2 S I$) and through progression from the exposed class ($\delta E$). 
They may be quarantined at rate $\gamma$ or die naturally at rate $\mu$. Their spatial spread is modeled by $\lambda_I \Delta I$.\\
%%%%%%%%%%%%%%
\textbf{\textit{Quarantined}} ($Q$): This compartment consists of individuals who are isolated after developing symptoms. They transition from the infected class at a rate $\gamma$ and may recover ($\alpha Q$), die naturally ($\mu Q$), or return to the susceptible class ($\rho Q$) if found uninfected. 
Their spatial redistribution is governed by $\lambda_Q \Delta Q$.\\
%%%%%%%%%%%%%%
\textbf{\textit{Recovered}} ($R$): Recovered individuals have acquired immunity after completing quarantine. They accumulate at a rate $\alpha Q$ and are subject to natural death at a rate $\mu$.
Their spatial mobility is described by $\lambda_R \Delta R$.

Throughout the model,  the diffusion terms $\lambda_j \Delta X$ for each compartment $X$ reflect spatial spread due to individual movement. 
The mortality terms $\mu X$ account for natural deaths unrelated to the disease.
By incorporating local disease dynamics and spatial processes, 
the model captures key epidemic propagation mechanisms, including localized outbreaks, spatial heterogeneity, and the potential impact of quarantine interventions.
Figure~\ref{F1} illustrates the compartmental structure of the SEIQR model and highlights key transitions between health states, including infection, quarantine, and recovery.

%%%%%%%%%%%%%%%%%%%%%%%%5 Figure 1
\begin{figure}[H]
\centering
\begin{tikzpicture}[node distance=4cm]
\node (S) [rectangle, draw, minimum size=1cm, fill=cyan!30] {S};
\node (I) [rectangle, draw, minimum size=1cm, fill=red!30, right of=S] {I};
\node (Q) [rectangle, draw, minimum size=1cm, fill=blue!30, right of=I] {Q};
\node (E) [rectangle, draw, minimum size=1cm, fill=orange!30, xshift = 4cm, yshift = 2.5cm] {E};
\node (R) [rectangle, draw, minimum size=1cm, fill=green!30, right of=Q] {R};
% Arrows with text label
\draw[->] (-1.5,0) -- ++(S) node[midway,above]{$\Lambda$};
\draw [->] (S.north) -- (0,2.5) -- (E) node[midway,below]{$\beta_1 S E$};
\draw [->] (S) -- (I) node[midway,below]{$\beta_2 S I$};
\draw [->] (4.1,2) -- (4.1,0.5) node[midway,right]{$\delta E$};
\draw [->] (I) -- (Q) node[midway,below]{$\gamma I$};
\draw [->] (Q) -- (R) node[midway,below]{$\alpha Q$};
\draw [->] (7.9,-0.5) -- (7.9,-1.7) -- (0.1,-1.7) node[midway,below]{$\rho Q$} -- (0.1,-0.5);
% ------------------------------------
\draw[->] (-0.1,-0.5) -| (-0.1,-1.5) node[near end,left]{$\mu S$};
\draw[->] (4.5,2.5) -- (5.7,2.5) node[midway,below]{$\mu E$};
\draw[->] (3.9,0.5) -| (3.9,1.5) node[near end,left]{$\mu I$};
\draw[->] (8.1,-0.5) -| (8.1,-1.5) node[near end,right]{$\mu Q$};
\draw[->] (R.south) -| (12,-1.5) node[near end,right]{$\mu R$};
\end{tikzpicture}
\captionof{figure}{Transmission pathways in the proposed SEIQR model.}\label{F1}
\end{figure}
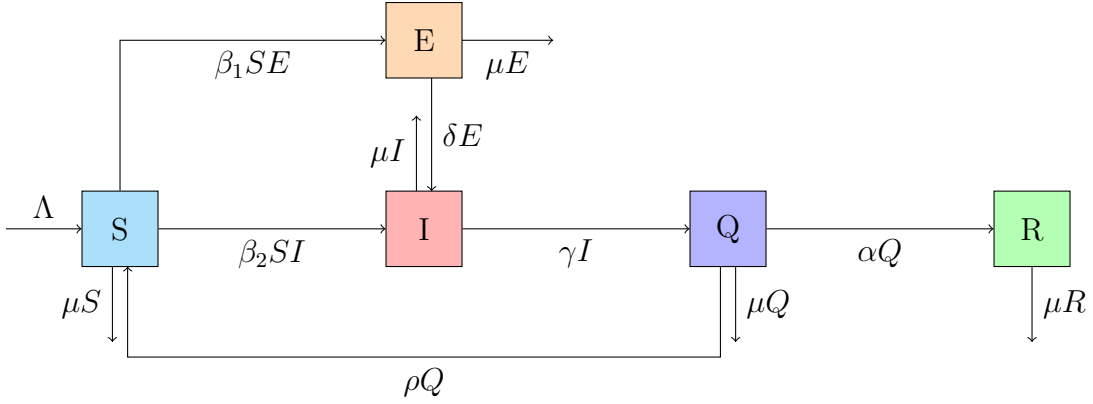

Let $\Omega \subset \mathbb{R}^n$ be a bounded domain with a smooth boundary $\partial \Omega$, 
where $n = 2$ or $n = 3$. 
The spatiotemporal dynamics of the proposed SEIQR model are described mathematically by the following system of reaction-diffusion equations
\begin{equation*}
\left\{\begin{aligned}
    \frac{\partial S(t, x)}{\partial t} &= \lambda_S \Delta S(t, x) + \Lambda + \rho Q(t, x) - \beta_1 S(t, x) E(t, x) - \beta_2 S(t, x) I(t, x) - \mu S(t, x), \\
    \frac{\partial E(t, x)}{\partial t} &= \lambda_E \Delta E(t, x) + \beta_1 S(t, x) E(t, x) - (\delta + \mu) E(t, x), \\
     \frac{\partial I(t, x)}{\partial t} &= \lambda_I \Delta I(t, x) + \beta_2 S(t, x) I(t, x) + \delta E(t, x) - (\gamma + \mu) I(t, x), \\
      \frac{\partial Q(t, x)}{\partial t} &= \lambda_Q \Delta Q(t, x) + \gamma I(t, x) - (\alpha + \rho + \mu) Q(t, x), \\
     \frac{\partial R(t, x)}{\partial t} &= \lambda_R \Delta R(t, x) +  \alpha Q(t, x) - \mu R(t, x),
\end{aligned}\right. \text{ in } \mathcal{U}, 
\end{equation*}
with
\begin{equation*}
\left\{\begin{aligned}
&\nabla S\cdot \vec{n} = \nabla E\cdot \vec{n} = \nabla I\cdot \vec{n} = \nabla Q\cdot \vec{n} = \nabla R\cdot \vec{n} = 0,  & &\text{ on } \Sigma_T,\\
&S(0,x)=S_0, \ E(0,x)=E_0, \ I(0,x)=I_0, \ Q(0,x)=Q_0, \ R(0,x)=R_0,  & &\text{ in } \Omega,
\end{aligned}\right.
\end{equation*}
where $T>0$, $\mathcal{U} = [0, T]\times\Omega$, $\vec{n}$ is the normal vector to the boundary $\Sigma_T = [0, T]\times\partial\Omega$. 
Imposing homogeneous Neumann (no-flux) boundary conditions ensures that the SEIQR mode is self-contained, with dynamics driven entirely by internal processes, and no movement occurs across the boundary $\partial \Omega$. Additionally, the initial data for all compartments are positive throughout the domain $\Omega$.

% ---------------------------------------------------------------- Section 2.1
\subsection{Controlled SEIQR Reaction-Diffusion Model}
We now formulate an optimal control model to manage the spread of infectious diseases. 
This model uses a reaction-diffusion framework that accounts for spatial dynamics and disease evolution within a population.
We introduce three control strategies: 
1) \textit{Pharmaceutical Interventions} (PI), represented by controls $u_1$ and $u_2$; 
and (2) \textit{Non-Pharmaceutical Interventions} (NPI), represented by control $u_3$. 
Specifically, $u_1$ models vaccination efforts that move susceptible individuals directly to the recovered class; $u_2$ reflects treatment strategies that accelerate the recovery of quarantined individuals; and $u_3$ represents social distancing measures that reduce transmission between susceptible, exposed, and infectious individuals. 
The goal is to minimize the overall cost of disease spread while ensuring the most efficient allocation of resources to control the epidemic.
The model captures the spatial spread of the disease via diffusion terms and incorporates control efforts to reduce transmission, speed recovery, and manage quarantine effectively.
Figure~\ref{F2}illustrates the updated compartmental structure of the optimal control SEIQR model based on Table~\ref{Tab1}.

%%%%%%%%%%%%%%%%%%%%%%%%% Figure 2
\begin{figure}[H]
\centering
\begin{tikzpicture}[node distance=4cm]
\node (S) [rectangle, draw, minimum size=1cm, fill=cyan!30] {S};
\node (I) [rectangle, draw, minimum size=1cm, fill=red!30, right of=S] {I};
\node (Q) [rectangle, draw, minimum size=1cm, fill=blue!30, right of=I] {Q};
\node (E) [rectangle, draw, minimum size=1cm, fill=orange!30, xshift = 4cm, yshift = 2.5cm] {E};
\node (R) [rectangle, draw, minimum size=1cm, fill=green!30, right of=Q] {R};
% Arrows with text label
\draw[->] (-1.5,0.1) -- (-0.5,0.1) node[midway,above]{$\Lambda$};
\draw [->] (S.north) -- (0,2.5) -- (E) node[midway,above]{$\beta_1 (1 - u_3) S E$};
\draw [->] (S) -- (I) node[midway,above]{$\beta_2 (1 - u_3) S I$};
\draw [->] (4.1,2) -- (4.1,0.5) node[midway,right]{$\delta E$};
\draw [->] (I) -- (Q) node[midway,above]{$\gamma I$};
\draw [->] (8.5,0.1) -- (11.5,0.1) node[midway,above]{$\alpha Q$};
\draw [->] (8.5,-0.1) -- (11.5,-0.1) node[midway,below]{$u_2 Q$};
\draw [->] (7.9,-0.5) -- (7.9,-1.5) -- (0.1,-1.5) node[midway,above]{$\rho Q$} -- (0.1,-0.5);
\draw [->] (-0.1,-0.5) -- (-0.1,-2.2) -- (11.9,-2.2) node[midway,above]{$u_1 S$} -- (11.9,-0.5);
% ------------------------------------
\draw[->] (-0.5,-0.1) -- (-1.5,-0.1) node[midway,below]{$\mu S$};
\draw[->] (4.5,2.5) -- (5.7,2.5) node[midway,above]{$\mu E$};
\draw[->] (3.9,0.5) -| (3.9,1.5) node[near end,left]{$\mu I$};
\draw[->] (8.1,-0.5) -| (8.1,-1.5) node[near end,right]{$\mu Q$};
\draw[->] (12.1,-0.5) -| (12.1,-1.5) node[near end,right]{$\mu R$};
\end{tikzpicture}
\captionof{figure}{Transmission pathways in the updated SEIQR model.}\label{F2}
\end{figure}
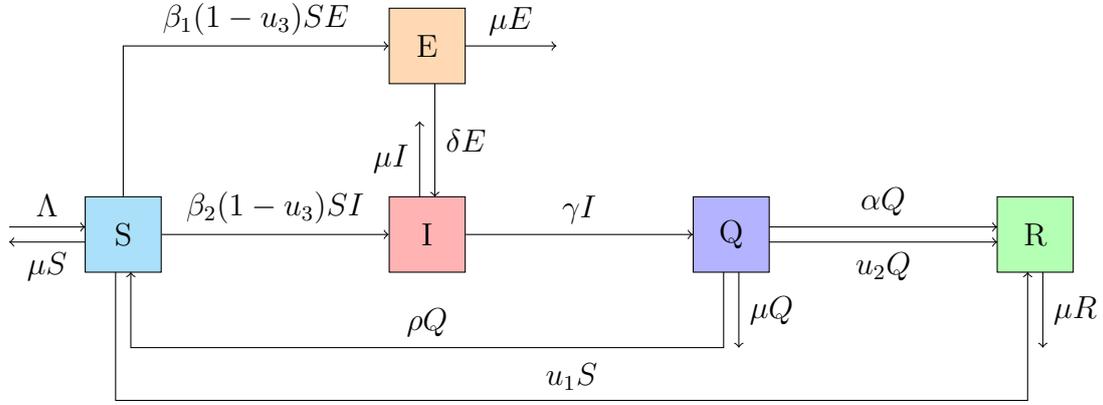

We aim to identify strategies that balance public health outcomes with resource constraints by optimizing these controls over time and space. 
This will provide valuable insights into epidemic management in real-world settings. 
The optimal control SEIQR model is mathematically formulated as follows:
\begin{equation}\label{E2.1}
\left\{\begin{aligned}
\frac{\partial S(t, x)}{\partial t} &= \lambda_S \Delta S(t, x) + \Lambda + \rho Q(t, x) - (\mu + u_1(t, x)) S(t, x)  \\
& \qquad- \beta_1 (1 - u_3(t, x)) S(t, x) E(t, x) - \beta_2 (1 - u_3(t, x)) S(t, x) I(t, x), \\
\frac{\partial E(t, x)}{\partial t} &= \lambda_E \Delta E(t, x) + \beta_1 (1 - u_3(t, x)) S(t, x) E(t, x) - (\delta + \mu) E(t, x), \\
\frac{\partial I(t, x)}{\partial t} &= \lambda_I \Delta I(t, x) + \beta_2 (1 - u_3(t, x)) S(t, x) I(t, x) + \delta E(t, x) - (\gamma + \mu) I(t, x), \\
\frac{\partial Q(t, x)}{\partial t} &= \lambda_Q \Delta Q(t, x) + \gamma I(t, x) - (\alpha + \rho + \mu + u_2(t, x)) Q(t, x), \\
\frac{\partial R(t, x)}{\partial t} &= \lambda_R \Delta R(t, x) +  \alpha Q(t, x) + u_1(t, x) S(t, x) + u_2(t, x) Q(t, x)- \mu R(t, x),
\end{aligned}\right. \text{ in } \mathcal{U}, 
\end{equation}
with
\begin{equation}\label{E2.2}
\left\{\begin{aligned}
   &\nabla S\cdot \vec{n} = \nabla E\cdot \vec{n} = \nabla I\cdot \vec{n} = \nabla Q\cdot \vec{n} = \nabla R\cdot \vec{n} = 0,  & &\text{ on } \Sigma_T,\\
   &S(0,x)=S_0, \ E(0,x)=E_0, \ I(0,x)=I_0, \ Q(0,x)=Q_0, \ R(0,x)=R_0,  & &\text{ in } \Omega.
\end{aligned}\right.
\end{equation}
Each equation describes the rate at which a compartment changes over time and space. 
These equations capture the interactions among compartments based on contact rates, transition dynamics, and natural processes, such as disease progression and recovery.

Table~\ref{Tab2} summarizes the role and impact of each control variable, detailing the type of intervention, the targeted compartments, and the intended epidemiological effects. These controls are designed to be adjusted over time $t$ and space $x$ to reflect practical implementation strategies and optimize epidemic outcomes.

%%%%%%%%%%%%%%%%%%%%%%%%% Table 2
\begin{table}[H]
\centering
\setlength{\tabcolsep}{0.32cm}
\caption{Description of control variables used in the optimal control model.}\label{Tab2}
\adjustbox{max width=\textwidth}{
\begin{tabular}{c||ccc}
\hline
\textbf{Control} & $ u_1(t,x) $ & $ u_2(t,x) $ & $ u_3(t,x) $\\
\hline\hline
\textbf{Type} & PI & PI & NPI\\
\hline
\multirow{2}{*}{\textbf{Description}} & \multirow{2}{*}{Vaccination} & \multirow{2}{*}{Treatment} & Social Distancing\\
& & & Public Awareness\\
\hline
\textbf{Target} & \multirow{2}{*}{$S$, $R$} & \multirow{2}{*}{$Q$, $R$} & \multirow{2}{*}{$S$, $E$, $I$}\\
\textbf{Compartment(s)} & & &\\
\hline
\multirow{3}{*}{\textbf{Effect}} & \multirow{2}{*}{Increases immunity} & Accelerates recovery & Reduces effective\\
& \multirow{2}{*}{in the population} & for quarantined & contact rates by\\
& & individuals & lowering interactions\\
\hline
\end{tabular}
}
\end{table}

% ---------------------------------------------------------------- Section 2.2
\subsection{Quantification of Epidemic and Control Costs}
Our primary objective is to minimize the number of individuals who are exposed, infected, or quarantined, while reducing the overall cost of vaccination and treatment within the time interval $[0, T]$.
In this subsection, we evaluate the total costs of implementing spatiotemporal control measures to mitigate the spread and impact of the epidemic while minimizing related expenses in time and space.

\textbf{\textit{Cost due to disease impact:}} 
This component captures the economic consequences of disease transmission, These consequences include productivity losses and healthcare burdens associated with exposed and infected individuals. It is represented by
\begin{equation*}
  \int_0^T \int_\Omega \kappa_1 (E(t,x) + I(t,x)) \, dx \, dt + \int_\Omega \kappa_3 (E(T,x) + I(T,x)) \, dx,
\end{equation*}
where $\kappa_1$ and $\kappa_3$ denote the respective weights associated with the prevalence of the disease during the control horizon and at the final time.

\textbf{\textit{Cost of quarantine and management:}} 
This cost accounts for the economic and logistical expenses related to isolating infected individuals, including quarantine facilities, monitoring, and care. 
It is given by
\begin{equation*}
   \int_0^T \int_\Omega \kappa_2 Q(t,x) \, dx \, dt + \int_\Omega \kappa_4 Q(T,x) \, dx,
\end{equation*}
where $\kappa_2$ and $\kappa_4$ measure the burden of quarantine over time and at the terminal state.%, respectively.

\textbf{\textit{Cost of vaccination campaigns:}} 
The control $u_1(t,x)$ represents vaccination efforts, with associated costs stemming from vaccine production, distribution, and administration. 
This is expressed as
\begin{equation*}
   \int_0^T \int_\Omega w_1 u_1(t,x) \, dx \, dt + \int_\Omega \sigma_1 u_1(T,x) \, dx,
\end{equation*}
where $w_1$ and $\sigma_1$ are weights reflecting operational and terminal costs of vaccination.

\textbf{\textit{Cost of quarantine, medical care, and isolation protocols:}} 
The control $u_2(t,x)$ reflects intensified isolation and treatment strategies for quarantined individuals, such as enhanced medical intervention and specialized care. 
Its cost is measured by $w_2$ during the control horizon and by $\sigma_2$ at the terminal state:
\begin{equation*}
\int_0^T \int_\Omega w_2 u_2(t,x) \, dx \, dt + \int_\Omega \sigma_2 u_2(T,x) \, dx.
\end{equation*}

\textbf{\textit{Cost of preventive measures and public health interventions:}}
The control $u_3(t,x)$ models efforts aimed at reducing transmission through behavioral interventions such as social distancing, mask mandates, and hygiene promotion. 
These actions require continuous investment in awareness campaigns and infrastructure, 
which incur a cost given by
\begin{equation*}
   \int_0^T \int_\Omega w_3 u_3(t,x) \, dx \, dt + \int_\Omega \sigma_3 u_3(T,x) \, dx,
\end{equation*}
where $w_3$ reflects the cost of sustained public health efforts and $\sigma_3$ accounts for the residual cost or effort required at the end of the intervention period.

Combining all these contributions, the total objective functional to be minimized is given by
\begin{equation}\label{E2.3}
   %\mathcal{J}(S, E, I, Q, R, u_1, u_2, u_3)
  \mathcal{J}(S, E, I, Q, R, u_1, u_2, u_3) = \int_0^T \int_\Omega \mathcal{A}_1(t, x) \, dx \, dt + \int_\Omega \mathcal{A}_2(T, x) \, dx,
\end{equation}
   subjected to \eqref{E2.1}--\eqref{E2.2}, where
\begin{equation*}
\begin{aligned}
    & \bullet \mathcal{A}_1(t, x) = \kappa_1 (E + I)(t,x) + \kappa_2 Q(t,x) + w_1 u_1(t,x) + w_2 u_2(t,x) + w_3 u_3(t,x),\\
    & \bullet \mathcal{A}_2(T, x) = \kappa_3 (E + I)(T,x) + \kappa_4 Q(T,x) + \sigma_1 u_1(T,x) + \sigma_2 u_2(T,x) + \sigma_3 u_3(T,x).
\end{aligned}
\end{equation*}

Let $(S, E, I, Q, R)$ be the solution to the system~\eqref{E2.1}--\eqref{E2.2}.
The optimal control problem is to minimize the objective functional $\mathcal{J}$; that is, to find a control function  
\begin{equation*}
     u^* = (u_1^*, u_2^*, u_3^*) \in \mathcal{V}_{ad} = \bigl\{(u_1, u_2, u_3) \in (L^2(\mathcal{U}))^3 \ | \ 0 \leq u_i(t, x) \leq 1 \ \text{a.e.\ in } \mathcal{U}, \ \forall i = 1, 2, 3 \bigr\},
\end{equation*}
such that
\begin{equation*}
    \mathcal{J}(S^*, E^*, I^*, Q^*, R^*, u_1^*, u_2^*, u_3^*) 
    = \inf_{(u_1, u_2, u_3) \in \mathcal{V}_{ad}} \mathcal{J}(S, E, I, Q, R, u_1, u_2, u_3).
\end{equation*}

% ================================================================  Section 3
\section{Well-Posedness Analysis}\label{S3}
In this section, we use the $C_0$-semigroup theory to prove the boundedness, positivity, existence, and uniqueness of solutions to the proposed system~\eqref{E2.1}--\eqref{E2.2}.
To do so, let $\psi = (\psi_i)_{1 \leq i \leq 5} = (S, E, I, Q, R)$, 
$\psi^0 =( \psi^0_i)_{1 \leq i \leq 5}$, 
$\lambda = (\lambda_i)_{1 \leq i \leq 5} = (\lambda_S, \lambda_E, \lambda_I, \lambda_Q, \lambda_R)$, 
$\mathbb{E}(\Omega) = \bigl(L^2(\Omega)\bigr)^5$ denotes the Hilbert space of 5-tuples of square-integrable functions on $\Omega$, corresponding to the components $(S, E, I, Q, R)$, and $\mathcal{T}$ defined as follows
\begin{equation*}
\begin{aligned}
    \mathcal{T}\colon \ \mathcal{D}_\mathcal{T} 
    = \Big\{\vartheta \in \bigl( H^2(\Omega) \bigr)^5 \mid \ \nabla \vartheta_i\cdot \vec{n} = 0,\; 1\leq i\leq 5\Big\} \subset \mathbb{E}(\Omega) &\longrightarrow \mathbb{E}(\Omega),\\ 
    \vartheta &\longrightarrow -\lambda\Delta \vartheta = (-\lambda_i\Delta \vartheta_i)_{i=1,2,3,4,5}.
\end{aligned}
\end{equation*}
We introduce the function $\mathcal{F}$, which is defined by
\begin{equation*}
    \mathcal{F}\bigl(\psi(t)\bigr)=\bigl(\mathcal{F}_1(\psi(t)),\,\mathcal{F}_2(\psi(t)),\,\mathcal{F}_3(\psi(t)),\,\mathcal{F}_4(\psi(t)),\,\mathcal{F}_5(\psi(t))\bigr), \ t\in [0, T],
\end{equation*}
with $\psi(t)(\cdot) = \psi(t,\cdot)$, and 
\begin{equation*}
   \left\{\begin{aligned}
   \mathcal{F}_1(\psi(t)) &= \Lambda + \rho \psi_4 - \beta_1 (1-u_3) \psi_1\psi_2 - \beta_2 (1-u_3) \psi_1\psi_3 - (\mu+u_1) \psi_1,\\
   \mathcal{F}_2(\psi(t)) &= \beta_1 (1-u_3) \psi_1\psi_2 - (\mu + \delta)\psi_2,\\
    \mathcal{F}_3(\psi(t)) &= \beta_2 (1-u_3) \psi_1\psi_3 + \delta\psi_2 - (\mu + \gamma)\psi_2,\\
     \mathcal{F}_4(\psi(t)) &= \gamma \psi_3 - (\mu + \alpha + \rho + u_2)\psi_4,\\
     \mathcal{F}_5(\psi(t)) &= \alpha \psi_4 + u_1\psi_1 + u_2 \psi_4 - \mu \psi_5.
\end{aligned}\right.
\end{equation*}
Subsequently, the system~\eqref{E2.1}--\eqref{E2.2} can be reformulated in $\mathbb{E}(\Omega)$ as follows
\begin{equation}\label{E3.1}
  \left\{\begin{aligned}
   &\partial_t \psi(t) + \mathcal{T}\psi(t) = \mathcal{F}(\psi(t)),\\
   &\psi(0)=\psi^0.
\end{aligned}\right. \quad t\in[0, T],
\end{equation}
First, we will prove the boundedness of the function $\psi$, as stated in the following result.

%%%%%%%%%%%%%%%%%%%%%%% Proposition 1
\begin{proposition}\label{P1}
All solutions of the SEIQR model~\eqref{E3.1} are bounded. 
\end{proposition}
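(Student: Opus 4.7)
The plan is to exploit the fact that summing the five equations of~\eqref{E2.1} causes all bilinear transmission terms and all control-related transfer terms to cancel, leaving only the recruitment $\Lambda$ and the natural-mortality term $-\mu N$ on the right-hand side. This collapses the \emph{a priori} estimate to a scalar linear ODE once one integrates in space.

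Concretely, setting $N(t,x) := S+E+I+Q+R$ and adding the five PDEs, the terms $\pm\beta_1(1-u_3)SE$ and $\pm\beta_2(1-u_3)SI$ cancel, and the transfer terms $\pm\rho Q$, $\pm\delta E$, $\pm\gamma I$, $\pm\alpha Q$, $\pm u_1 S$, $\pm u_2 Q$ pair up, yielding
\begin{equation*}
\partial_t N \,-\, \sum_{i=1}^{5}\lambda_i\,\Delta\psi_i \;=\; \Lambda \,-\, \mu\,N.
\end{equation*}
Integrating over $\Omega$ and invoking the homogeneous Neumann conditions (so that $\int_\Omega \lambda_i\Delta\psi_i\,dx = 0$ for each $i$ by the divergence theorem) reduces matters to the scalar linear ODE $M'(t) + \mu M(t) = \Lambda\,|\Omega|$ for $M(t):=\int_\Omega N(t,x)\,dx$. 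This integrates explicitly to
\begin{equation*}
M(t) \;=\; \frac{\Lambda|\Omega|}{\mu} \,+\, \Bigl(M(0)-\frac{\Lambda|\Omega|}{\mu}\Bigr)\,e^{-\mu t},
\end{equation*}
so that $M(t)\leq \max\bigl\{M(0),\,\Lambda|\Omega|/\mu\bigr\}$ uniformly on $[0,T]$. Combined with the nonnegativity of each $\psi_i$ (to be secured separately in the positivity argument), this already delivers a uniform bound on each $\|\psi_i(t)\|_{L^1(\Omega)}$.

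To lift this $L^1$ estimate into the ambient functional framework $\mathbb{E}(\Omega)=(L^2(\Omega))^5$ employed later in the semigroup analysis, I would recast~\eqref{E3.1} in mild form $\psi(t)=e^{-t\mathcal{T}}\psi^0 + \int_0^t e^{-(t-s)\mathcal{T}}\mathcal{F}(\psi(s))\,ds$ and exploit the $L^p\!\to\! L^q$ smoothing of the analytic semigroup $\{e^{-t\mathcal{T}}\}_{t\geq 0}$ generated by the Neumann Laplacian. The quadratic nonlinearities in $\mathcal{F}$ are then controlled by interpolation, and a standard Gronwall / bootstrap argument propagates the $L^1$ bound up to successive $L^p$ bounds, closing the estimate on $[0,T]$.

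The main obstacle is the heterogeneity of the diffusion coefficients $\lambda_S,\lambda_E,\lambda_I,\lambda_Q,\lambda_R$: since they are not all equal, the equation for $N$ is not a scalar parabolic PDE with a single diffusion operator, and the parabolic maximum principle cannot be invoked for $N$ directly to obtain pointwise boundedness in one shot. The integration-in-space step is precisely what circumvents this, as it annihilates all diffusion contributions simultaneously regardless of the individual $\lambda_i$; the semigroup smoothing then plays the role of the maximum principle in passing to the desired norm.
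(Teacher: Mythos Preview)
Your route is genuinely different from the paper's. The paper never passes through the total population $N$ at all: for each component $\psi_i$ separately it sets $\eta_i=\max\{\|\mathcal{F}_i\|_{L^\infty(\mathcal{U})},\|\psi_i^0\|_{L^\infty(\Omega)}\}$, introduces the barrier functions $\varphi_i=\psi_i-\eta_i t-\|\psi_i^0\|_{L^\infty(\Omega)}$ and $\tilde\varphi_i=\psi_i+\eta_i t+\|\psi_i^0\|_{L^\infty(\Omega)}$, writes each in Duhamel form, and uses the order-preserving property of the scalar heat semigroup $\mathcal{S}_i(t)$ to conclude $\varphi_i\le 0\le\tilde\varphi_i$, hence $|\psi_i(t,x)|\le \eta_i t+\|\psi_i^0\|_{L^\infty(\Omega)}$ pointwise. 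This lands directly in $L^\infty(\mathcal{U})$, one component at a time, and is indifferent to the heterogeneity of the $\lambda_i$ precisely because no summation of equations is ever performed.

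Two structural points about your alternative are worth flagging. First, it inverts the paper's logical order: you need $\psi_i\ge 0$ to pass from the $L^1$ bound on $N$ to $L^1$ bounds on the individual components, but in the paper positivity (Proposition~\ref{P2}) is proved \emph{after} boundedness and actually invokes Proposition~\ref{P1}. Your argument therefore requires an independent positivity proof (e.g., via quasi-positivity of $\mathcal{F}$ and invariance of the positive cone), which you acknowledge but should make self-contained. Second, the lift from $L^1$ to $L^\infty$ via $L^p\!\to\!L^q$ smoothing and Gronwall is only sketched; for quadratic nonlinearities in $n\le 3$ with mass control this bootstrap is classical but not a one-liner, whereas the paper's comparison argument bypasses it entirely. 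What your approach buys in return is an explicit, $T$-independent $L^1$ bound $\max\{M(0),\Lambda|\Omega|/\mu\}$ with clear epidemiological meaning; the paper's pointwise bound grows linearly in $t$ and is phrased in terms of $\|\mathcal{F}_i\|_{L^\infty(\mathcal{U})}$.
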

%%%%%%%%%%%%%%%
\begin{proof}
Let $i \in \{1, 2, 3, 4, 5\}$, $\{\mathcal{S}_i(t), t \geq 0\}$ be the $C_0$-semigroup generated by the operator component $\mathcal{T}_i$, 
where $\mathcal{T}_i \psi_i = \lambda_i\Delta \psi_i$. Then, we set
\begin{equation*}
    \eta_i = \max\Bigl\{ \bigl\|\mathcal{F}_i\bigr\|_{L^{\infty}(\mathcal{U}))}, \bigl\|\psi_i^0\bigr\|_{L^{\infty}(\Omega)} \Bigr\}.
\end{equation*}
It is clear that the function
\begin{equation*}
    \varphi_i = \psi_i - \eta_i t - \bigl\|\psi_i^0\bigr\|_{L^{\infty}(\Omega)},
\end{equation*}
satisfies
\begin{equation}\label{E3.2}
   \left\{\begin{aligned}
    &\partial_t \varphi_i = \lambda_i \Delta \varphi_i + \mathcal{F}_i(\psi(t)) - \eta_i,  \\
    &\varphi_i(0, x) = \psi_i^0 -\bigl\|\psi_i^0\bigr\|_{L^{\infty}(\Omega)}.
\end{aligned}\right. \quad t \in[0, T],
\end{equation}
The strong solution of \eqref{E3.2} takes the form
\begin{equation*}
   \varphi_i(t) = \mathcal{S}_i(t) \Bigl(\psi_i^0 - \bigl\|\psi_i^0\bigr\|_{L^{\infty}(\Omega)} \Bigr) 
    +\int_0^t \mathcal{S}_i(t-s) \bigl(\mathcal{F}_i(\psi(t))- \eta_i\bigr) \,ds.
\end{equation*}
Since $\mathcal{F}_i(\psi(t))- \eta_i \leq 0$ and $\psi_i^0-\bigl\|\psi_i^0\bigr\|_{L^{\infty}(\Omega)} \leq 0$, it follows that $\varphi_i \leq 0$ for all $(t, x) \in \mathcal{U}$.\\
Similarly, the function 
\begin{equation*}
    \Tilde{\varphi}_i = \psi_i + \eta_i t + \bigl\|\psi_i^0\bigr\|_{L^{\infty}(\Omega)},
\end{equation*}
satisfies
\begin{equation*}
    \left\{\begin{aligned}
     &\partial_t \Tilde{\varphi}_i (t, x)= \lambda_i \Delta \Tilde{\varphi}_i + \mathcal{F}_i(\psi(t))+ \eta_i,\\
     &\Tilde{\varphi}_i(0, x)=\psi_i^0 + \bigl\|\psi_i^0\bigr\|_{L^{\infty}(\Omega)}.
\end{aligned}\right. \quad t \in[0, T].
\end{equation*}
Thus, 
\begin{equation*}
\Tilde{\varphi}_i(t) = \mathcal{S}_i(t) \Bigl(\psi_i^0 + \bigl\|\psi_i^0\bigr\|_{L^{\infty}(\Omega)} \Bigr) 
    + \int_0^t \mathcal{S}_i(t-s) \bigl(\mathcal{F}_i(\psi(t)) + \eta_i\bigr) \,ds.
\end{equation*}
Since $\mathcal{F}_i(\psi(t)) + \eta_i \geq 0$ and $\psi_i^0 + \bigl\|\psi_i^0\bigr\|_{L^{\infty}(\Omega)} \geq 0$, it follows that $\Tilde{\varphi}_i \geq 0$ for all $(t, x) \in \mathcal{U}$.\\
Consequently,
\begin{equation*}
    \bigl|\psi_i(t, x)\bigr| \leq \eta_i t + \bigl\|\psi_i^0\bigr\|_{L^{\infty}(\Omega)}.
\end{equation*}
Therefore, we have $\psi \in \bigl(L^{\infty}(\mathcal{U})\bigr)^5$.
\end{proof}

Now, let us prove that $\psi$ is nonnegative using Cauchy-Schwarz and Gronwall's inequalities, 
as stated in the following proposition.
%%%%%%%%%%%%%%%%%%%%%%%%%%%%%%%%%%%%%% Proposition 2
\begin{proposition}\label{P2}
For all positive initial data associated with the system~\eqref{E2.1}--\eqref{E2.2}, the solutions of the SEIQR model remain positive.
\end{proposition}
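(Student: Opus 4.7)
The plan is to derive an $L^2$ energy inequality for the negative parts of each component $\psi_i$ and close it via Gronwall's lemma. Introducing the decomposition $\psi_i = \psi_i^+ - \psi_i^-$ with $\psi_i^- = \max(-\psi_i, 0) \geq 0$, one has the pointwise identities $\psi_i \, \psi_i^- = -(\psi_i^-)^2$, $\partial_t \psi_i \cdot \psi_i^- = -\tfrac{1}{2}\partial_t (\psi_i^-)^2$, and $\nabla \psi_i \cdot \nabla \psi_i^- = -|\nabla \psi_i^-|^2$ almost everywhere. Testing the $i$-th equation of \eqref{E3.1} with $\psi_i^-$, integrating over $\Omega$, and using the homogeneous Neumann boundary condition, I would obtain
\begin{equation*}
\frac{1}{2}\frac{d}{dt}\|\psi_i^-(t)\|_{L^2(\Omega)}^2 + \lambda_i \|\nabla \psi_i^-(t)\|_{L^2(\Omega)}^2 = -\int_\Omega \mathcal{F}_i(\psi(t))\,\psi_i^-(t)\,dx,
\end{equation*}
so that the gradient term is nonnegative and may be discarded from the inequality.

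Next, I would bound the reaction integral by exploiting the quasi-positivity of $\mathcal{F}$: whenever $\psi_i = 0$ while the remaining components are nonnegative, one checks directly that $\mathcal{F}_i(\psi) \geq 0$ (for instance $\mathcal{F}_1|_{\psi_1 = 0} = \Lambda + \rho \psi_4$, $\mathcal{F}_3|_{\psi_3 = 0} = \delta \psi_2$, $\mathcal{F}_4|_{\psi_4 = 0} = \gamma \psi_3$, and so on). Splitting each $\psi_j$ into $\psi_j^+ - \psi_j^-$ and invoking the $L^\infty$-bound $\|\psi_i\|_{L^\infty(\mathcal{U})} \leq M$ established in Proposition~\ref{P1}, together with $0 \leq u_k \leq 1$, every cross-term appearing in $-\int_\Omega \mathcal{F}_i(\psi)\,\psi_i^-\,dx$ can be dominated, by Cauchy-Schwarz and Young's inequality $ab \leq \tfrac{1}{2}a^2 + \tfrac{1}{2}b^2$, by a constant multiple of $\sum_{j=1}^{5} \|\psi_j^-\|_{L^2(\Omega)}^2$. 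Summing the five resulting inequalities over $i = 1, \ldots, 5$ then gives
\begin{equation*}
\frac{d}{dt} \sum_{i=1}^{5} \|\psi_i^-(t)\|_{L^2(\Omega)}^2 \leq C \sum_{i=1}^{5} \|\psi_i^-(t)\|_{L^2(\Omega)}^2,
\end{equation*}
for a constant $C > 0$ depending only on $T$, $M$, and the model parameters.

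Since all initial data are positive on $\Omega$, we have $\psi_i^-(0,\cdot) \equiv 0$ for every $i$, so Gronwall's inequality forces $\|\psi_i^-(t)\|_{L^2(\Omega)} = 0$ for all $t \in [0,T]$, and hence $\psi_i \geq 0$ almost everywhere in $\mathcal{U}$. The main technical obstacle will be the careful bookkeeping of the bilinear incidence terms $\beta_k (1-u_3)\psi_1\psi_j$: these enter $\mathcal{F}_1$ and the corresponding $\mathcal{F}_j$ with opposite signs, so products such as $\psi_1 \psi_2 \psi_1^-$ and $\psi_1 \psi_2 \psi_2^-$ must be split via the $\psi_j^\pm$ decomposition and the unfavorable parts absorbed by the pointwise boundedness from Proposition~\ref{P1}. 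The recruitment contribution $-\Lambda\,\psi_1^-$, by contrast, is already nonpositive, so it cooperates with the dissipation rather than threatening the estimate.
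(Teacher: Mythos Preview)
Your proposal is correct and follows essentially the same approach as the paper: test each equation against the negative part $\psi_i^-$, discard the dissipative gradient term, invoke the $L^\infty$ bound from Proposition~\ref{P1} to control the bilinear incidence terms, and close with Gronwall's inequality. The only cosmetic difference is that you sum the five energy inequalities into a single Gronwall estimate for $\sum_i \|\psi_i^-\|_{L^2(\Omega)}^2$, whereas the paper treats the components one at a time in a specific order; your summed version is arguably cleaner since it sidesteps any worry about the order in which positivity is established.
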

%%%%%%%%%%% Proof %%%%%%%%%%%
\begin{proof}
We start considering $\psi_4$. It can be observed that $\psi_4 = \psi_4^+ - \psi_4^-$, where we defined
\begin{equation*}
     \psi_4^+(t, x) = \sup\{\psi_4(t, x), 0\} \quad\text{and}\quad \psi_4^-(t, x) = \sup\{-\psi_4(t, x), 0\}.
\end{equation*}
Multiplying the equation corresponding to $i=4$ in~\eqref{E3.1} by $ \psi_4^- $, we obtain
\begin{equation*}
   -\frac{1}{2} \frac{d}{d t} \bigl\|\psi_4^-\bigr\|_{L^2(\Omega)}^2
   = \lambda_4\int_{\Omega}\bigl|\nabla \psi_4^-\bigr|^2 \,dx
   - \gamma\int_{\Omega} \psi_3 \psi_4^- \,dx
    + (\alpha + \rho + \mu + u_2)\int_{\Omega}  (\psi_4^-)^2 \,dx .
\end{equation*}
Then, we have
\begin{equation*}
    \frac{d}{d t} \bigl\|\psi_4^-\bigr\|_{L^2(\Omega)}^2 \leq 
        2\gamma\int_{\Omega} \psi_3 \psi_4^- \,dx .
\end{equation*}
Next, applying the Cauchy-Schwarz inequality, we get
\begin{equation*}
   \frac{d}{d t} \bigl\|\psi_4^-\bigr\|_{L^2(\Omega)}^2 
   \leq c \bigl\|\psi_3\bigr\|_{L^2(\Omega)}^2 \bigl\|\psi_4^-\bigr\|_{L^2(\Omega)}^2.
\end{equation*}
% \Tmatthias{next sentence is unclear.}
By Proposition~\ref{P1}, we have $\psi_i \in L^{\infty}(\mathcal{U})$ for all $i \in \{1, 2, 3, 4, 5\}$. Then, there exists another constant $c > 0$ such that
\begin{equation*}
   \frac{d}{dt} \bigl\|\psi_4^-\bigr\|_{L^2(\Omega)}^2 
    \leq c \bigl\|\psi_4^-\bigr\|_{L^2(\Omega)}^2.
\end{equation*}
%Recall that for all $i \in \{1,2,3,4,5\}$, . 
Using Gronwall's inequality, we obtain
\begin{equation*}
      \bigl\|\psi_4^-\bigr\|_{L^2(\Omega)}^2 \leq 0,
\end{equation*}
which leads to the conclusion that $\psi_4^- = 0$. 
Consequently, $\psi_4 \geq 0$ for all $(t, x) \in \mathcal{U}$.

Using a similar methodology applied to $\psi_2$, we derive the following expression
\begin{equation*}
   -\frac{1}{2} \frac{d}{d t}\bigl\|\psi_2^-\bigr\|_{L^2(\Omega)}^2 =   
   \lambda_2\int_{\Omega}\bigl|\nabla \psi_2^-\bigr|^2 \,dx 
    - \beta_1 (1 - u_3) \int_{\Omega} \psi_1 (\psi_2^-)^2 \,dx 
    + (\delta + \mu)\int_{\Omega} (\psi_2^-)^2 \,dx,
\end{equation*}
which can be expressed as
\begin{equation*}
   \frac{d}{d t} \bigl\|\psi_2^-\bigr\|_{L^2(\Omega)}^2 
   \leq 2\beta_1 (1-u_3) \int_{\Omega} \psi_1 (\psi_2^-)^2 \,dx
    \leq 2\beta_1 (1-u_3) N \bigl\|\psi_2^-\bigr\|_{L^2(\Omega)}^2,
\end{equation*}
where $N \in L^{\infty}(\mathcal{U})$ is the total population. Since $(u_1, u_2, u_3) \in \mathcal{V}_{ad}$, then again by Gronwall's inequality, we obtain
\begin{equation*}
    \bigl\|\psi_2^-\bigr\|_{L^2(\Omega)}^2 \leq 0,
\end{equation*}
which implies $\psi_2^- = 0$, and thus $\psi_2 \geq 0$. 
Using the same technique, we conclude that $\psi_1, \psi_3, \psi_5 \geq 0$.
\end{proof}

Let $(t,x)\in \mathcal{U}$. 
The function $\mathcal{F}$ is Lipschitz continuous in $\psi$, 
uniformly with respect to $t\in[0,T]$.
Moreover, the operator $-\Delta$ is strongly elliptic, cf.\ \cite{Pazy1983, Zinihi2025CH}. 
According to standard results in functional analysis (see, for example \cite{Barbu1994, Pazy1983, Zinihi2025CH}), the system~\eqref{E3.1} admits a unique strong solution $\psi \in W^{1,2}(0, T; \mathbb{E}(\Omega))$, satisfying the regularity conditions
\begin{equation*}
   \psi_i \in L^2(0, T; H^2(\Omega)) \cap L^\infty(0, T; H^1(\Omega)),\quad \forall i \in \{1, 2, 3, 4, 5\}.
\end{equation*}
This foundational result leads us to the following corollary.

%%%%%%%%%%%%%%%%%%%%%%%%%%%%%%
\begin{corollary}\label{C1}
Problem~\eqref{E3.1} has a unique bounded positive global solution $\psi\in W^{1,2} (0, T; \mathbb{E}(\Omega))$. 
Furthermore,
\begin{equation*}
    \psi_i \in W^{1,2}\bigl(0, T; L^2(\Omega)\bigr)  \cap L^2\bigl(0, T; H^2(\Omega)\bigr) \cap L^\infty\bigl(0, T; H^1(\Omega)\bigr) \cap L^\infty(\mathcal{U})\quad \forall i\in\{1, 2, 3, 4, 5\}.
\end{equation*}
\end{corollary}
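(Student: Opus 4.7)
The plan is to combine Propositions~\ref{P1} and~\ref{P2} with the analytic-semigroup framework sketched just above the corollary. First I would verify that the operator $\mathcal{T}$ generates an analytic $C_0$-semigroup on $\mathbb{E}(\Omega)$: componentwise, $-\lambda_i \Delta$ with homogeneous Neumann boundary conditions is a strongly elliptic, self-adjoint, densely defined operator on $L^2(\Omega)$ with domain $\{\vartheta \in H^2(\Omega) : \nabla \vartheta \cdot \vec{n} = 0\}$, and therefore generates an analytic semigroup on $L^2(\Omega)$ by the classical Hille--Yosida--Lumer--Phillips theory. Taking the direct product yields an analytic semigroup on $\mathbb{E}(\Omega)$.

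Next I would handle the nonlinearity. Since $\mathcal{F}$ is polynomial of degree two in $\psi$ with bounded time-space coefficients (the controls $u_1, u_2, u_3 \in \mathcal{V}_{ad}$ take values in $[0,1]$), it is locally Lipschitz on bounded subsets of $\mathbb{E}(\Omega)$ uniformly in $t \in [0, T]$. Invoking Pazy's abstract existence theorem (or its parabolic counterpart in Barbu), problem~\eqref{E3.1} admits a unique mild solution $\psi \in C([0, T_{\max}); \mathbb{E}(\Omega))$ on some maximal interval, with uniqueness following from the local Lipschitz bound together with Gronwall's inequality.

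Third, I would promote this to a global strong solution on $[0, T]$. The $L^\infty(\mathcal{U})$ bound from Proposition~\ref{P1} precludes finite-time blow-up, so $T_{\max} \geq T$. Moreover, $\psi \in (L^\infty(\mathcal{U}))^5$ implies $\mathcal{F}(\psi) \in L^2(0, T; \mathbb{E}(\Omega))$, and since $\psi^0 \in (H^1(\Omega))^5$ lies in the real interpolation space $[\mathbb{E}(\Omega), \mathcal{D}_\mathcal{T}]_{1/2}$, the maximal-regularity property of analytic semigroups on Hilbert spaces yields
\[
\partial_t \psi,\ \mathcal{T}\psi \in L^2(0, T; \mathbb{E}(\Omega)),
\]
whence elliptic regularity for $-\Delta$ with Neumann data gives $\psi_i \in L^2(0, T; H^2(\Omega))$. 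The $L^\infty(0, T; H^1(\Omega))$ bound then follows from a standard energy estimate obtained by testing the $i$-th equation against $-\Delta \psi_i$, integrating over $\Omega$, using the $L^\infty$ bounds of Proposition~\ref{P1} to control the quadratic cross terms, and applying Gronwall's inequality. Positivity of each component is immediate from Proposition~\ref{P2}.

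The main obstacle I expect is the regularity step: one must justify that with merely $L^2$-integrable nonlinearity and $H^1$ initial data, the semigroup framework simultaneously delivers $W^{1,2}(0, T; L^2)$ and $L^2(0, T; H^2)$ regularity. This is precisely where the analyticity of $\mathcal{T}$ (not merely its $C_0$ character) is essential; the Hilbert structure of $L^2(\Omega)$ together with the strong ellipticity and self-adjointness of $-\Delta$ on the Neumann domain supplies the required maximal $L^2$-regularity, after which elliptic regularity and the energy estimate close the argument.
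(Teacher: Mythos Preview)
Your proposal is correct and follows essentially the same route as the paper: the paper's argument (contained in the paragraph immediately preceding the corollary) simply asserts that $\mathcal{F}$ is Lipschitz in $\psi$, that $-\Delta$ is strongly elliptic, and then invokes standard results from \cite{Barbu1994, Pazy1983} to obtain the strong solution with the stated regularity, after which Propositions~\ref{P1} and~\ref{P2} supply the $L^\infty$ bound and positivity. Your write-up is a more careful and explicit version of the same scheme---in particular you correctly flag that $\mathcal{F}$ is only \emph{locally} Lipschitz (the paper's unqualified ``Lipschitz'' is imprecise), you make explicit the role of analyticity for maximal $L^2$-regularity, and you use the a~priori $L^\infty$ bound from Proposition~\ref{P1} to rule out blow-up and pass from local to global existence, which the paper leaves implicit.
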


%%%%%%%%%%%%%%%%%%%%%%%%
\begin{proposition}\label{P3}
Let $\psi$ be the solution of \eqref{E3.1}. Then
\begin{equation*}
  \Bigl\|\frac{\partial \psi_i}{\partial t}\Bigr\|_{L^2(\mathcal{U})}
   +\bigl\|\psi_i\bigr\|_{L^2(0, T; H^2(\Omega))}
   +\bigl\|\psi_i\bigr\|_{H^1(\Omega)} 
   +\bigl\|\psi_i\bigr\|_{L^{\infty}(\mathcal{U})} < \infty.
\end{equation*}
\end{proposition}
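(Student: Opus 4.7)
The plan is to derive the bound by performing three successive energy estimates on each equation of \eqref{E3.1}, using Proposition~\ref{P1} (which gives uniform $L^\infty$ bounds on $\psi_i$ and, through the structure of $\mathcal{F}$, on the right-hand sides) together with Corollary~\ref{C1} for regularity justification. Fix $i\in\{1,\dots,5\}$ and write the equation as $\partial_t \psi_i - \lambda_i\Delta\psi_i = \mathcal{F}_i(\psi)$ with homogeneous Neumann data. Since $u_k\in[0,1]$ and each $\psi_j$ is in $L^\infty(\mathcal{U})$, an elementary inspection of the five right-hand sides shows that $\mathcal{F}_i(\psi)\in L^\infty(\mathcal{U})\hookrightarrow L^2(\mathcal{U})$ with a bound $M$ depending only on the model parameters, $\|\psi^0_j\|_{L^\infty(\Omega)}$, $T$ and $|\Omega|$.

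First, I would multiply the equation by $\psi_i$, integrate over $\Omega$, use the Neumann condition to obtain $\int_\Omega \nabla\psi_i\cdot\nabla\psi_i$, and combine with Young's inequality on the right-hand side. Integrating in time and applying Grönwall yields $\psi_i\in L^\infty(0,T;L^2(\Omega))\cap L^2(0,T;H^1(\Omega))$ with an explicit constant. Second, to reach $H^2$-regularity, I would test the equation with $-\Delta\psi_i$; integration by parts (justified by the Neumann condition and the smoothness of $\partial\Omega$) produces
\begin{equation*}
\frac{1}{2}\frac{d}{dt}\|\nabla\psi_i\|_{L^2(\Omega)}^2 + \lambda_i\|\Delta\psi_i\|_{L^2(\Omega)}^2 = -\int_\Omega \mathcal{F}_i(\psi)\,\Delta\psi_i\,dx,
\end{equation*}
and Young's inequality absorbs $\|\Delta\psi_i\|_{L^2}^2$ on the left, leaving a bound controlled by $\|\mathcal{F}_i(\psi)\|_{L^2}^2 \le M^2|\mathcal{U}|$. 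Grönwall then gives $\psi_i\in L^\infty(0,T;H^1(\Omega))\cap L^2(0,T;H^2(\Omega))$, which controls the second and third summands of the claim.

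Third, to handle $\|\partial_t\psi_i\|_{L^2(\mathcal{U})}$, I would test against $\partial_t\psi_i$ itself. After integration by parts the identity reads
\begin{equation*}
\|\partial_t\psi_i\|_{L^2(\mathcal{U})}^2 + \frac{\lambda_i}{2}\bigl(\|\nabla\psi_i(T)\|_{L^2(\Omega)}^2-\|\nabla\psi^0_i\|_{L^2(\Omega)}^2\bigr) = \int_0^T\!\!\int_\Omega \mathcal{F}_i(\psi)\,\partial_t\psi_i\,dx\,dt,
\end{equation*}
and one more application of Young's inequality, together with the $L^\infty(0,T;H^1(\Omega))$ bound obtained in the previous step, yields $\partial_t\psi_i\in L^2(\mathcal{U})$. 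Finally, the $L^\infty(\mathcal{U})$-bound is already contained in Proposition~\ref{P1}, so summing the four contributions gives the announced estimate.

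The main obstacle is the coupling of the five equations through the bilinear terms $\beta_1(1-u_3)SE$ and $\beta_2(1-u_3)SI$: in the energy estimate for one component the source $\mathcal{F}_i(\psi)$ involves other components of $\psi$, so the arguments above cannot be applied componentwise in isolation. This is why the preliminary $L^\infty$ bound of Proposition~\ref{P1} is essential — it decouples the right-hand sides at the level of norms, allowing each $\mathcal{F}_i(\psi)$ to be treated as a bounded forcing with a fully quantitative constant, after which the three successive tests against $\psi_i$, $-\Delta\psi_i$ and $\partial_t\psi_i$ can be carried out independently on each equation.
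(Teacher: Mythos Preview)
Your argument is correct and rests on the same essential ingredient as the paper: the $L^\infty$ control of $\mathcal{F}_i(\psi)$ coming from Proposition~\ref{P1}, which decouples the nonlinear system and turns each equation into a linear heat equation with bounded forcing. The execution, however, differs. The paper proceeds in a single stroke by squaring the equation $\partial_t\psi_i - \lambda_i\Delta\psi_i = \mathcal{F}_i(\psi)$ and integrating over $(0,t)\times\Omega$; the cross term $-2\lambda_i\int\partial_\tau\psi_i\,\Delta\psi_i$ integrates (in space, then in time) to $2\lambda_i\bigl(\|\nabla\psi_i(t)\|_{L^2}^2-\|\nabla\psi_i^0\|_{L^2}^2\bigr)$ up to a constant, so the identity simultaneously bounds $\|\partial_t\psi_i\|_{L^2(\mathcal{U})}$, $\|\Delta\psi_i\|_{L^2(\mathcal{U})}$ and $\|\nabla\psi_i(t)\|_{L^2(\Omega)}$ by $\|\mathcal{F}_i(\psi)\|_{L^2(\mathcal{U})}$ and $\|\nabla\psi_i^0\|_{L^2(\Omega)}$. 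Your three-test scheme ($\psi_i$, $-\Delta\psi_i$, $\partial_t\psi_i$) is the standard textbook route and is slightly more work; in fact your first test is superfluous here because your second test already closes without any lower-order input (the right-hand side is controlled by $\|\mathcal{F}_i\|_{L^2}^2$ alone, so no Gr\"onwall is needed there either). The paper's one-shot squaring is shorter; your approach is more modular and would adapt more easily if $\mathcal{F}_i$ were only in $L^2$ rather than $L^\infty$.
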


\begin{proof}
The first equation of \eqref{E3.1} gives
\begin{equation*}
\begin{aligned}
   \int_0^t \int_{\Omega} \Bigl|\frac{\partial \psi_1}{\partial  \tau}\Bigr|^2 \,d\tau dx 
   & -2 \lambda_1 \int_0^t \int_{\Omega} \frac{\partial \psi_1}{\partial \tau} \Delta \psi_1 \,d\tau dx + \lambda_1^2 \int_0^t \int_{\Omega} |\Delta \psi_1|^2 \,d\tau dx \\
   =& \int_0^t \int_{\Omega} \bigl(\Lambda + \rho \psi_4 - \beta_1 (1-u_3) \psi_1\psi_2 - \beta_2 (1-u_3) \psi_1\psi_3 - (\mu+u_1)\psi_1 \bigr)^2 \,d\tau dx.
\end{aligned} 
\end{equation*}
Because of
\begin{equation*}
  \int_0^t \int_{\Omega} \frac{\partial \psi_1}{\partial \tau} \Delta \psi_1 \,d\tau dx
  = \int_{\Omega}\Bigl(-\bigl|\nabla \psi_1\bigr|^2 + \bigl|\nabla \psi_1^0\bigr|^2\Bigr) \,dx,
\end{equation*}
we have
\begin{equation*}
\begin{aligned}
    \int_0^t \int_{\Omega} \Bigl|\frac{\partial \psi_1}{\partial \tau}\Bigr|^2 \,d\tau dx &+ \lambda_1^2 \int_0^t \int_{\Omega} \bigl|\Delta \psi_1\bigr|^2 \,d\tau dx + 2 \lambda_1 \int_{\Omega} \bigl|\nabla \psi_1\bigr|^2 \,dx
   -2 \lambda_1 \int_{\Omega} \bigl|\nabla \psi_1^0\bigr|^2 \,dx \\
   =& \int_0^t \int_{\Omega} \bigl(\Lambda + \rho \psi_4 - \beta_1 (1-u_3) \psi_1\psi_2 - \beta_2 (1-u_3) \psi_1\psi_3 - (\mu+u_1)\psi_1 \bigr)^2 \,d\tau dx.
\end{aligned}
\end{equation*}
Due to the boundedness of $\psi_1^0 \in H^2(\Omega)$ and $\|\psi_i\|_{L^{\infty}(Q)}$, the result holds for i = 1. 
Analogous reasoning can be applied to the remaining scenarios.
\end{proof}

% ================================================================  Section 4
\section{Existence of an Optimal Solution}\label{S4}
This section proves the existence of an optimal solution pair, $(\psi^*, u^*)$ for the optimal control problem, Eq.~\eqref{E3.1}, using the technique of minimizing sequences, where $u^* = (u_1^*, u_2^*, u_3^*)$. The main result of this section is formulated and proved as follows:
%%%%%%%%%%%%%%%%%%%%%%
\begin{theorem}
Problem~\eqref{E2.1}--\eqref{E2.2} admits at least an optimal solution pair $(\psi^*,u^*)$ such that  
\begin{equation*}
    \mathcal{J}(\psi^*, u^*) = \inf_{u \in \mathcal{V}_{\text{ad}}} \mathcal{J}(\psi, u).
\end{equation*}
\end{theorem}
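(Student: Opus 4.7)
The plan is to apply the direct method of the calculus of variations: construct a minimizing sequence of admissible controls, extract convergent subsequences for both the controls and the associated states, pass to the limit in the state system, and use lower semicontinuity of the cost functional. Since every integrand defining $\mathcal{J}$ is nonnegative on $\mathcal{V}_{ad}$ and on the positive solutions guaranteed by Proposition~\ref{P2}, the infimum $\mathcal{I} := \inf_{u\in\mathcal{V}_{ad}}\mathcal{J}(\psi,u)\ge 0$ is finite. Pick a minimizing sequence $\{u^n\}=\{(u_1^n,u_2^n,u_3^n)\}\subset\mathcal{V}_{ad}$ with $\mathcal{J}(\psi^n,u^n)\to\mathcal{I}$, where $\psi^n=(S^n,E^n,I^n,Q^n,R^n)$ is the unique strong solution of \eqref{E3.1} associated with $u^n$, provided by Corollary~\ref{C1}.

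Next, I would extract subsequences. Since $0\le u_i^n\le 1$ a.e.\ in $\mathcal{U}$, the sequence $\{u^n\}$ is bounded in $(L^2(\mathcal{U}))^3$, so up to a subsequence $u^n\rightharpoonup u^*=(u_1^*,u_2^*,u_3^*)$ weakly in $(L^2(\mathcal{U}))^3$. Because $\mathcal{V}_{ad}$ is convex and closed in $(L^2(\mathcal{U}))^3$, it is weakly closed, hence $u^*\in\mathcal{V}_{ad}$. For the states, Corollary~\ref{C1} and Proposition~\ref{P3} provide uniform bounds on $\psi_i^n$ in $L^\infty(\mathcal{U})\cap L^2(0,T;H^2(\Omega))\cap L^\infty(0,T;H^1(\Omega))$ and on $\partial_t\psi_i^n$ in $L^2(\mathcal{U})$. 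The Aubin--Lions lemma applied with the compact embedding $H^1(\Omega)\hookrightarrow L^2(\Omega)$ then yields, up to a further subsequence, strong convergence $\psi_i^n\to\psi_i^*$ in $L^2(\mathcal{U})$, weak convergence in $L^2(0,T;H^2(\Omega))$, weak-$*$ convergence in $L^\infty(\mathcal{U})$, and weak convergence of $\partial_t\psi_i^n$ in $L^2(\mathcal{U})$. Possibly extracting once more, $\psi_i^n\to\psi_i^*$ a.e.\ on $\mathcal{U}$.

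The main obstacle is passing to the limit in the nonlinear coupling terms such as $\beta_1(1-u_3^n)S^n E^n$, $\beta_2(1-u_3^n)S^n I^n$, and the control-state products $u_1^n S^n$, $u_2^n Q^n$ that appear in \eqref{E2.1}. The uniform $L^\infty$ bound combined with the pointwise a.e.\ and strong $L^2$ convergence of $\psi_i^n$ (via dominated convergence) gives $S^n E^n\to S^* E^*$ and $S^n I^n\to S^* I^*$ strongly in $L^2(\mathcal{U})$; multiplying by the weakly convergent factor $(1-u_3^n)\rightharpoonup(1-u_3^*)$ in $L^2(\mathcal{U})$ yields convergence of the product in the sense of distributions, which, tested against any $\varphi\in L^\infty(\mathcal{U})$, suffices to identify the limit as $(1-u_3^*)S^* E^*$. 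The same strong-times-weak argument handles $u_1^n S^n$ and $u_2^n Q^n$. Together with the linear convergence of $\lambda_i\Delta\psi_i^n$ in the distributional sense and the weak convergence of $\partial_t\psi_i^n$, this shows that $\psi^*$ is the strong solution of \eqref{E2.1}--\eqref{E2.2} associated with $u^*$; the Neumann boundary conditions and initial data are preserved by the respective traces.

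Finally, I would establish lower semicontinuity of $\mathcal{J}$. The running state contributions $\kappa_1(E+I)$ and $\kappa_2 Q$ pass to the limit by strong $L^1(\mathcal{U})$ convergence; the control contributions $w_j u_j^n$ are linear and continuous under weak $L^2$ convergence, so they converge to $w_j u_j^*$. For the terminal terms involving $\psi_i^n(T,\cdot)$, the continuity of the embedding $W^{1,2}(0,T;L^2(\Omega))\cap L^2(0,T;H^2(\Omega))\hookrightarrow C([0,T];H^1(\Omega))$ together with the compactness provided by Aubin--Lions yields $\psi_i^n(T,\cdot)\to\psi_i^*(T,\cdot)$ strongly in $L^2(\Omega)$, so $\int_\Omega\kappa_3(E^n+I^n)(T)+\kappa_4 Q^n(T)\,dx$ converges. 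For the terminal control terms $\sigma_j u_j^n(T,\cdot)$, which are not a priori defined from $L^2(\mathcal{U})$ alone, I would interpret them via the trace afforded by the admissible set's boundedness or, equivalently, treat them by the same weak lower semicontinuity since $u_j\mapsto\int_\Omega\sigma_j u_j(T,\cdot)\,dx$ is linear and bounded on the space in which the minimizing sequence lies. Combining these, $\mathcal{J}(\psi^*,u^*)\le\liminf_{n\to\infty}\mathcal{J}(\psi^n,u^n)=\mathcal{I}$, and the reverse inequality is immediate from $u^*\in\mathcal{V}_{ad}$, giving the desired optimum.
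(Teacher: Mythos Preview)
Your proof follows essentially the same strategy as the paper's: the direct method with a minimizing sequence, weak compactness of the controls in $(L^2(\mathcal{U}))^3$, weak closedness of $\mathcal{V}_{ad}$, uniform a priori bounds on the states from Corollary~\ref{C1} and Proposition~\ref{P3}, compactness to upgrade state convergence, and passage to the limit in the nonlinear and control-state products via a strong-times-weak argument. The only technical differences are that you invoke Aubin--Lions where the paper uses equicontinuity plus Ascoli--Arzel\`a (both yield strong $L^2(\mathcal{U})$ and $C([0,T];L^2(\Omega))$ convergence of the states), and you carry out the final lower-semicontinuity step for $\mathcal{J}$ explicitly, whereas the paper's proof essentially stops after identifying $\psi^*$ as the state associated with $u^*$. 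One point worth flagging: your treatment of the terminal control terms $\sigma_j u_j(T,\cdot)$ is necessarily heuristic, since elements of $L^2(\mathcal{U})$ have no trace at $t=T$; this is a modeling artifact of the cost functional \eqref{E2.3} rather than a defect in your argument, and the paper does not resolve it either.
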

%%%%%%%%%%%%%%%% proof
\begin{proof}
From Corollary~\ref{C1}, for any control $u = (u_1, u_2, u_3) \in \mathcal{V}_{\text{ad}}$, the system~\eqref{E3.1} admits a unique nonnegative strong solution $\psi$.
Consequently, the cost functional $\mathcal{J}(\psi, u)$ is bounded on $\mathcal{V}_{\text{ad}}$. 
Therefore, there exists a constant
\begin{equation*}
    \chi := \inf_{u \in \mathcal{V}_{\text{ad}}} \mathcal{J}(\psi, u),
\end{equation*}
and a minimizing sequence $\bigl\{(\psi^n, u^n)\bigr\}_{n \in \mathbb{N}^*} \subset \mathbb{E}(\Omega) \times \mathcal{V}_{\text{ad}}$, such that
\begin{equation*}
     \chi = \lim_{n \to \infty} \mathcal{J}(\psi^n, u^n),
\end{equation*}
where $\psi^n$ is a solution to the system~\eqref{E2.1}--\eqref{E2.2}.

Furthermore, without loss of generality, we can assume that 
\begin{equation*}
   \forall n \in \mathbb{N}^*, \ \chi \leq \mathcal{J}(\psi^n, u^n) \leq \chi + \frac{1}{n}.
\end{equation*}
On one hand, since $\{u_1^n\}_{n\in\mathbb{N}^*}$, $\{u_2^n\}_{n\in\mathbb{N}^*}$, 
and $\{u_3^n\}_{n\in\mathbb{N}^*}$ are uniformly bounded in $L^2(\mathcal{U})$,
there exist functions $u_1^*, u_2^*, u_3^* \in L^2(\mathcal{U})$
and subsequences of $\{u_1^n\}_{n\in\mathbb{N}^*}$, $\{u_2^n\}_{n\in\mathbb{N}^*}$,
and $\{u_3^n\}_{n\in\mathbb{N}^*} $,
still denoted by themselves, respectively, such that 
\begin{equation}\label{E4.1}
        u_j^n \rightharpoonup u_j^* \quad \text{weakly in}\;L^2(\mathcal{U}),\;\forall j = 1, 2, 3.
\end{equation}
Moreover, since $\mathcal{V}_{\text{ad}}$ is a convex and closed subset of 
$\bigl(L^2(\mathcal{U})\bigr)^3$, it is also weakly closed. 
Hence, $u^* = (u_1^*, u_2^*, u_3^*)\in\mathcal{V}_{\text{ad}}$.

On the other hand, let $i\in \{1, 2, 3, 4, 5\} $. 
From Corollary~\ref{C1}, we have $\psi_i^n \in W^{1,2}(0, T; L^2(\Omega))$,
which implies $\psi_i^n \in C([0, T]; L^2(\Omega))$.
Meanwhile, by Proposition~\ref{P3}, there exists a constant $\mathscr{C}>0$, such that
\begin{equation}\label{E4.2}
   \Bigl\| \frac{\partial \psi_i^n}{\partial t} \Bigr\|_{L^2(\mathcal{U})}
   + \bigl\| \psi_i^n \bigr\|_{L^2(0, T; H^2(\Omega))}
   + \bigl\| \psi_i^n \bigr\|_{H^1(\Omega)}
   + \bigl\| \psi_i^n \bigr\|_{L^{\infty}(\mathcal{U})} \leq \mathscr{C}.
\end{equation}
By virtue of the inequality above, we can deduce the equicontinuity of the 
family $\{\psi_i^n\}_{n\in\mathbb{N}^*}$. 
Since $H^1(\Omega)$ is compactly embedded into $L^2(\Omega)$ (see \cite[Page 71]{Simon1986}), 
it follows that $\{\psi_i^n\}_{n\in\mathbb{N}^*}$ is relatively compact in $L^2(\Omega)$,
and we have $\|\psi_i^n(t)\|_{L^2(\Omega)} < \infty$ for all $t\in[0,T]$.
Then, by the Ascoli–Arzelà Theorem (see \cite[page 200]{Green1961}),
there exists a function $\psi_i^*\in C([0, T]; L^2(\Omega))$ 
and a subsequence of $\{\psi_i^n(t)\}_{n\in\mathbb{N}^*}$, still denoted by itself, such that
\begin{equation*}
    \lim_{n \to \infty} \sup_{t \in [0, T]} \bigl\| \psi_i^n(t) - \psi_i^*(t) \bigr\|_{L^2(\Omega)} = 0,
\end{equation*}
which means
\begin{equation}\label{E4.3}
    \psi_i^n \longrightarrow \psi_i^* \quad \text{uniformly for } t \in [0, T].
\end{equation}
From the inequality~\eqref{E4.2}, we know that the sequence $\bigl\{ \frac{\partial \psi_i^n}{\partial t} \bigr\}_{n\in\mathbb{N}^*}$ is bounded in $L^2(0, T; L^2(\Omega))$. 
Similarly, since $ \psi^n $ is a solution to \eqref{E2.1}--\eqref{E2.2}, 
it follows from~\eqref{E4.2} that $\{\Delta \psi_i^n\}_{n\in\mathbb{N}^*}$ is also bounded 
in $L^2(0, T; L^2(\Omega))$.
Therefore, there exists a subsequence of $\{\psi_i^n\}_{n\in\mathbb{N}^*}$, 
denoted again by $\{\psi_i^n\}_{n\in\mathbb{N}^*}$, such that
\begin{equation}\label{E4.4}
\begin{aligned}
   & \frac{\partial \psi_i^n}{\partial t} \rightharpoonup \frac{\partial \psi_i^*}{\partial t} \quad \text{weakly in } L^2(0, T; L^2(\Omega)), \\
   & \Delta \psi_i^n \rightharpoonup \Delta \psi_i^* \quad \text{weakly in } L^2(0, T; L^2(\Omega)), \\
   & \psi_i^n \longrightarrow \psi_i^* \quad \text{strongly in } L^\infty(0, T; H^1(\Omega)).
\end{aligned}
\end{equation}
Based on equations~\eqref{E4.1}--\eqref{E4.4}, we can establish the following convergences
\begin{align*}
   & \psi_1^n \psi_k^n \longrightarrow \psi_1^* \psi_k^* \quad \text{strongly in } L^\infty(0, T; H^1(\Omega)), \quad \forall\, k = 2, 3,\\
   & \psi_1^n u_1^n \rightharpoonup \psi_1^* u_1^* \quad \text{weakly in } L^2(0, T; L^2(\Omega)), \\
   & \psi_1^n \psi_k^n u_1^n \rightharpoonup \psi_1^* \psi_k^* u_1^* \quad \text{weakly in } L^2(0, T; L^2(\Omega)), \quad \forall\, k = 1, 2.
\end{align*}
Recall that $\nabla\psi_i^n \cdot \vec{n} = 0$ on $\Sigma_T$, 
which implies that $\nabla \psi_i^* \cdot \vec{n} = 0$ on $\Sigma_T$ as well.
Therefore, using the convergences above, we conclude that $\psi$ is a solution to the system~\eqref{E2.1}--\eqref{E2.2} corresponding to the control $u^*\in\mathcal{V}_{\text{ad}}$.
\end{proof}

% ================================================================ Section 5
\section{First-Order Necessary Conditions}\label{S5}
In this section, we derive the first-order necessary conditions for optimal control using the Lagrangian method. 
Let $P_S(t,x)$, $P_E(t,x)$, $P_I(t,x)$, $P_Q(t,x)$, and $P_R(t,x)$ be the adjoint variables corresponding to the state constraints respectively. 
The Lagrangian functional is given by
\begin{equation*}
   \mathcal{L}(\psi, u) = \int_0^T \int_\Omega \mathcal{A}_1(t, x) \, dx \, dt + \int_\Omega \mathcal{A}_2(T, x) \, dx + \sum_{i=1}^5 \int_0^T \int_\Omega P_{\psi_i} \Bigl[\frac{\partial \psi_i}{\partial t} + \mathcal{T}\psi_i - \mathcal{F}_i(\psi)\Bigr] \, dx \, dt.
\end{equation*}
Taking the variation of $\mathcal{L}$ with respect to each state variable and applying integration by parts along with Green’s formula yields 
for the $S$-equation (i.e., $\frac{\partial \mathcal{L}}{\partial S}(\psi^*, u^*) = 0$)
\begin{equation*}
\frac{\partial P_S}{\partial t} = -\lambda_S \Delta P_S + (\mu + u_1^* 
+ \beta_1(1-u_3^*)E^* + \beta_2(1-u_3^*)I^*)P_S - \beta_1(1-u_3^*)E^* P_E 
- \beta_2(1-u_3^*)I^* P_I - u_1^* P_R.
\end{equation*}
Similarly, for other state variables, we obtain
\begin{equation}\label{E5.1}
\left\{\begin{aligned}
\frac{\partial P_S}{\partial t} &= -\lambda_S \Delta P_S + 
(\mu + u_1^* + \beta_1(1-u_3^*)E^* + \beta_2(1-u_3^*)I^*)P_S \\
& \qquad- \beta_1(1-u_3^*)E^* P_E - \beta_2(1-u_3^*)I^* P_I - u_1^* P_R, \\
\frac{\partial P_E}{\partial t} &= -\lambda_E \Delta P_E + \beta_1(1-u_3^*)S^* P_S 
+ (\delta + \mu - \beta_1(1-u_3^*)S^*)P_E - \delta P_I + \kappa_1, \\
\frac{\partial P_I}{\partial t} &= -\lambda_I \Delta P_I + \beta_2(1-u_3^*)S^* P_S 
+ (\gamma + \mu - \beta_2(1-u_3^*)S^*)P_I - \gamma P_Q + \kappa_1, \\
\frac{\partial P_Q}{\partial t} &= -\lambda_Q \Delta P_Q - \rho P_S 
+ (\alpha + \rho + \mu + u_2^*)P_Q - (\alpha + u_2^*)P_R + \kappa_2, \\
\frac{\partial P_R}{\partial t} &= -\lambda_R \Delta P_R + \mu P_R.
\end{aligned}\right.
\quad \text{ in } \mathcal{U}, 
\end{equation}
Because the state variables are subject to homogeneous Neumann boundary conditions and the controls only act within the spatial domain, integration by parts yields no boundary terms. 
Consequently, the adjoint variables satisfy the same type of boundary conditions as the state variables: homogeneous Neumann conditions.
This means
\begin{equation}\label{E5.2}
    \nabla P_S \cdot \vec{n} = \nabla P_E \cdot \vec{n} = \nabla P_I \cdot \vec{n} = \nabla P_Q \cdot \vec{n} = \nabla P_R \cdot \vec{n} = 0, \ \text{on } \Sigma_T.
\end{equation}
The terminal conditions are obtained by taking the variation of the terminal cost term and setting the resulting expression equal to zero. Consequently, the adjoint variables are initialized backward in time, taking on values that reflect the sensitivity of the terminal cost to the final states
\begin{equation}\label{E5.3}
     P_S(T, x) = 0, \ P_E(T, x) = -\kappa_3, \ P_I(T, x) = -\kappa_3, \ P_Q(T, x) = -\kappa_4, \ P_R(T, x) = 0, \ \text{ in } \Omega.
\end{equation}

We can readily obtain the following results by introducing the change of variable, $\tau = T - t$ (time reversal), and applying the same arguments as in Corollary~\ref{C1}.
\begin{corollary}\label{C2}
Assume that $(\psi^*, u^*)$ is an optimal pair for the control system~\eqref{E2.1}--\eqref{E2.2}. 
Then, the adjoint system~\eqref{E5.1}--\eqref{E5.3} admits a unique strong solution $P_\psi$ such that
\begin{equation*}
    P_{\psi_i} \in W^{1,2}\bigl(0, T; L^2(\Omega)\bigr) \cap L^2\bigl(0, T; H^2(\Omega)\bigr) \cap L^\infty\bigl(0, T; H^1(\Omega)\bigr) \cap L^\infty(\mathcal{U}), 
    \ \forall i \in \{1, 2, 3, 4, 5\}.
\end{equation*}
\end{corollary}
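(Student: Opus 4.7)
The plan is to mirror the argument used for Corollary~\ref{C1}, after converting the terminal-value problem~\eqref{E5.1}--\eqref{E5.3} into an equivalent initial-value problem through the time reversal $\tau = T - t$. Setting $\tilde{P}_{\psi_i}(\tau, x) = P_{\psi_i}(T-\tau, x)$, the sign of every time derivative flips and the terminal data at $t=T$ become initial data at $\tau=0$. Because the diffusion terms in \eqref{E5.1} already appear with a minus sign in front of the Laplacian, the reversed system takes the abstract form
\begin{equation*}
    \partial_\tau \tilde{P} + \mathcal{T}\tilde{P} = \tilde{\mathcal{G}}(\tau,\tilde{P}), \qquad \tilde{P}(0) = \tilde{P}^0,
\end{equation*}
with the same elliptic operator $\mathcal{T}$ acting on $\mathcal{D}_{\mathcal{T}}$ (no-flux boundary conditions are preserved by \eqref{E5.2}), and a right-hand side $\tilde{\mathcal{G}}$ that is linear in $\tilde{P}$ with coefficients built from $S^*, E^*, I^*, u_1^*, u_2^*, u_3^*$ evaluated at $T-\tau$.

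The first step is to verify that $\tilde{\mathcal{G}}$ satisfies the same Lipschitz hypothesis that made the reasoning of Corollary~\ref{C1} work for \eqref{E3.1}. Since $\tilde{\mathcal{G}}$ depends linearly on $\tilde{P}$, it suffices to show its coefficients are essentially bounded, uniformly in $\tau\in[0,T]$. This follows directly from Corollary~\ref{C1}, which gives $\psi_i^*\in L^{\infty}(\mathcal{U})$ for $i=1,\dots,5$, together with $0\leq u_j^*\leq 1$ a.e.\ because $u^*\in\mathcal{V}_{\text{ad}}$. Hence the coefficients lie in $L^\infty(\mathcal{U})$, which yields global Lipschitz continuity of $\tilde{\mathcal{G}}$ in $\tilde{P}$, uniformly in $\tau$.

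Once this is in place, I would invoke the same semigroup-theoretic ingredients cited after Proposition~\ref{P3}: the operator $-\Delta$ with homogeneous Neumann boundary data is strongly elliptic and generates an analytic $C_0$-semigroup on $\mathbb{E}(\Omega)$, and the standard results from \cite{Barbu1994, Pazy1983, Zinihi2025CH} guarantee a unique strong solution $\tilde{P}\in W^{1,2}(0,T;\mathbb{E}(\Omega))$ with the maximal regularity $\tilde{P}_{\psi_i}\in L^2(0,T;H^2(\Omega))\cap L^\infty(0,T;H^1(\Omega))$. Reverting back via $t=T-\tau$ transfers every one of these regularities to $P_\psi$. The $L^{\infty}(\mathcal{U})$ bound and the positivity-type estimates are obtained by replaying, \emph{mutatis mutandis}, the comparison arguments used in Propositions~\ref{P1} and~\ref{P3}, which only rely on boundedness of the forcing data and of the initial/terminal values $\kappa_3,\kappa_4$.

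The main obstacle, and the only place where care is genuinely needed, is the verification that the reversed-time system does generate an analytic semigroup with the correct sign convention: although $\partial_t P - \lambda \Delta P = \cdots$ in \eqref{E5.1} is a backward equation in forward time, after the change of variable the resulting forward problem in $\tau$ reads $\partial_\tau \tilde{P} - \lambda \Delta \tilde{P} = \cdots$, so the Laplacian appears with the correct (parabolic) sign and $\mathcal{T}$ acts exactly as in Corollary~\ref{C1}. Once this is checked, the remainder of the argument is a routine transposition of the well-posedness and regularity proof already developed for the state system, which is why the result is stated as a corollary rather than a theorem.
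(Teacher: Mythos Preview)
Your proposal is correct and follows exactly the approach indicated in the paper: the authors state just before Corollary~\ref{C2} that the result is obtained ``by introducing the change of variable $\tau = T - t$ (time reversal), and applying the same arguments as in Corollary~\ref{C1},'' and you have carried out precisely this program, with more detail than the paper itself provides. Your careful check that the sign of the Laplacian comes out correctly after time reversal, and that the linear coefficients are in $L^\infty(\mathcal{U})$ thanks to Corollary~\ref{C1} and $u^*\in\mathcal{V}_{\text{ad}}$, are exactly the points one needs to verify.
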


A key step in our analysis to establish the first-order necessary conditions for optimal control is to prove the differentiability of the control-to-state mapping.\\
Let $u^{\varepsilon} = (u_1^{\varepsilon}, u_2^{\varepsilon}, u_3^{\varepsilon}) \in \mathcal{V}_{ad}$, and $(S^{\varepsilon}, E^{\varepsilon}, I^{\varepsilon}, Q^{\varepsilon}, R^{\varepsilon})$ be the corresponding solution to \eqref{E2.1}--\eqref{E2.2}. 
We define the \textit{control-to-state mapping}
\begin{equation}\label{E5.4}
   \Phi\colon u^{\varepsilon} \in\mathcal{V}_{ad} \subset \bigl(L^2(\mathcal{U})\bigr)^3 \longmapsto (S^{\varepsilon}, E^{\varepsilon}, I^{\varepsilon}, Q^{\varepsilon}, R^{\varepsilon}) \in \bigl(L^2(\mathcal{U})\bigr)^5.
\end{equation}
By Corollary~\ref{C1}, the mapping $\Phi$ is well-defined. We now state in Lemma~\ref{L1} the differentiability result.

%%%%%%%%%%%%%%%%%%%%%%%
\begin{lemma}\label{L1}
The mapping $\Phi$ defined in~\eqref{E5.4} is Gâteaux differentiable at $u^*$. Moreover, there exists a bounded linear operator
\begin{equation*}
   \Phi^{\prime}(u^*) \colon \widetilde{u} \in \bigl(L^2(\mathcal{U})\bigr)^3 \longmapsto (Y_S, Y_E, Y_I, Y_Q, Y_R) \in \bigl(L^2(\mathcal{U})\bigr)^5,
\end{equation*}
such that %for any direction $\widetilde{u} = (\widetilde{u}_1, \widetilde{u}_2, \widetilde{u}_3) \in \bigl(L^2(\mathcal{U})\bigr)^3$ and 
for every sufficiently small $\varepsilon > 0$ with $u^{\varepsilon} = u^* + \varepsilon \widetilde{u} \in \mathcal{V}_{ad}$, we have
\begin{equation*}
\lim_{\varepsilon \to 0} \biggl\| \frac{\Phi(u^{\varepsilon}) - \Phi(u^*)}{\varepsilon} - \Phi^{\prime}(u^*) \widetilde{u} \biggr\|_{\bigl(L^2(\mathcal{U})\bigr)^5} = 0,
\end{equation*}
where
\begin{equation*}
  \frac{\Phi(u^{\varepsilon}) - \Phi(u^*)}{\varepsilon} = 
  \Bigl(\frac{S^\varepsilon - S^*}{\varepsilon}, \frac{E^\varepsilon - E^*}{\varepsilon}, \frac{I^\varepsilon - I^*}{\varepsilon}, \frac{Q^\varepsilon - Q^*}{\varepsilon}, \frac{R^\varepsilon - R^*}{\varepsilon}\Bigr) =: Y^\varepsilon.
\end{equation*}
\end{lemma}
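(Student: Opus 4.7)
The plan is to establish Gâteaux differentiability by first identifying the candidate derivative as the solution of a linearized (sensitivity) system, then proving that the difference quotient $Y^\varepsilon$ converges to this candidate in $(L^2(\mathcal{U}))^5$. Formally subtracting \eqref{E2.1}--\eqref{E2.2} evaluated at $u^\varepsilon = u^* + \varepsilon\widetilde{u}$ and at $u^*$, dividing by $\varepsilon$, and passing to the limit suggests that the candidate $(Y_S, Y_E, Y_I, Y_Q, Y_R)$ should solve the linearized system
\begin{equation*}
\left\{\begin{aligned}
\partial_t Y_S &= \lambda_S \Delta Y_S + \rho Y_Q - (\mu + u_1^*) Y_S - \widetilde{u}_1 S^* \\
&\quad - \beta_1(1-u_3^*)(Y_S E^* + S^* Y_E) + \beta_1 \widetilde{u}_3 S^* E^* \\
&\quad - \beta_2(1-u_3^*)(Y_S I^* + S^* Y_I) + \beta_2 \widetilde{u}_3 S^* I^*, \\
\partial_t Y_E &= \lambda_E \Delta Y_E + \beta_1(1-u_3^*)(Y_S E^* + S^* Y_E) - \beta_1 \widetilde{u}_3 S^* E^* - (\delta + \mu) Y_E, \\
\partial_t Y_I &= \lambda_I \Delta Y_I + \beta_2(1-u_3^*)(Y_S I^* + S^* Y_I) - \beta_2 \widetilde{u}_3 S^* I^* + \delta Y_E - (\gamma + \mu) Y_I, \\
\partial_t Y_Q &= \lambda_Q \Delta Y_Q + \gamma Y_I - (\alpha + \rho + \mu + u_2^*) Y_Q - \widetilde{u}_2 Q^*, \\
\partial_t Y_R &= \lambda_R \Delta Y_R + \alpha Y_Q + u_1^* Y_S + \widetilde{u}_1 S^* + u_2^* Y_Q + \widetilde{u}_2 Q^* - \mu Y_R,
\end{aligned}\right.
\end{equation*}
with homogeneous Neumann boundary conditions and zero initial data. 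Since $(S^*, E^*, I^*, Q^*, R^*) \in (L^\infty(\mathcal{U}))^5$ by Corollary~\ref{C1} and $u^* \in \mathcal{V}_{ad}$ is bounded, this is a linear parabolic system with $L^\infty$ coefficients and $L^2$ source terms, so arguments entirely parallel to Section~\ref{S3} (analytic semigroup theory plus energy estimates) yield a unique strong solution $(Y_S, Y_E, Y_I, Y_Q, Y_R)$ with the regularity of Corollary~\ref{C1}. This defines $\Phi'(u^*)\widetilde{u}$, and linearity in $\widetilde{u}$ is immediate from the structure of the system.

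Next, I would establish continuity of the solution with respect to the control, showing that $\|(S^\varepsilon - S^*, \ldots, R^\varepsilon - R^*)\|_{(L^2(\mathcal{U}))^5} = O(\varepsilon)$. This follows by writing the equations for the differences, testing each equation against the corresponding difference, using Cauchy–Schwarz together with the $L^\infty$ bounds of $\psi^*$ and $\psi^\varepsilon$ provided by Proposition~\ref{P1}, and invoking Gronwall's inequality. Hence $Y^\varepsilon$ is uniformly bounded in $(L^2(\mathcal{U}))^5$, and in fact a parallel energy estimate shows that $Y^\varepsilon$ is uniformly bounded in the regularity class of Corollary~\ref{C1}.

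The crucial step is then to estimate $Z^\varepsilon := Y^\varepsilon - \Phi'(u^*)\widetilde{u}$. Subtracting the system satisfied by $\Phi'(u^*)\widetilde{u}$ from the equation for $Y^\varepsilon$, one finds that $Z^\varepsilon$ satisfies a linear parabolic system analogous to the sensitivity system but with inhomogeneities consisting of remainder terms of the form $\varepsilon\, Y_S^\varepsilon Y_E^\varepsilon$, $\varepsilon\, Y_S^\varepsilon Y_I^\varepsilon$, $(u_3^\varepsilon - u_3^*) Y_S^\varepsilon E^*$, $(u_3^\varepsilon - u_3^*) S^* Y_E^\varepsilon$, and similar bilinear corrections arising from the trilinear terms $u_3 S E$ and $u_3 S I$. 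Using the uniform $L^\infty$-bounds on $\psi^*$ and $\psi^\varepsilon$, the uniform $L^2$-bounds on $Y^\varepsilon$, and the fact that $\|u_3^\varepsilon - u_3^*\|_{L^2(\mathcal{U})} = \varepsilon\|\widetilde{u}_3\|_{L^2(\mathcal{U})}$, I would bound each remainder by $O(\varepsilon)$ in an appropriate norm. A standard energy estimate combined with Gronwall's inequality then gives $\|Z^\varepsilon\|_{(L^2(\mathcal{U}))^5} \to 0$ as $\varepsilon \to 0$, which is the required Gâteaux differentiability. Boundedness of $\Phi'(u^*)$ as a linear operator follows from the a priori estimates on the sensitivity system.

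The main obstacle I anticipate is precisely handling these trilinear remainder terms of the form $u_3 S E$ and $u_3 S I$: naive expansion produces terms where, a priori, one only has weak $L^2$ convergence of $u_3^\varepsilon$ while the compensating factor must be controlled pointwise. Resolving this relies critically on the strong $L^\infty(0,T;H^1(\Omega))$ convergence of the states established in Section~\ref{S4} together with the uniform $L^\infty$ bounds from Proposition~\ref{P1}, which together allow one to treat the products as genuinely bilinear perturbations rather than as pairings of only weakly convergent factors.
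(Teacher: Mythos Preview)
Your proposal is correct and follows essentially the same route as the paper: the paper's proof of Lemma~\ref{L1} (given jointly with Proposition~\ref{P4}) is only a five-step outline---define the incremental quotient, derive the linearized system~\eqref{E5.5}--\eqref{E5.6}, establish its well-posedness, prove $\|Y^\varepsilon - Y\|_{(L^2(\mathcal{U}))^5}\to 0$, and check boundedness of $\Phi'(u^*)$---with the details deferred to references; your sketch fills in exactly these steps, and your candidate linearized system coincides with~\eqref{E5.5}. One minor comment: in your final paragraph you worry about ``only weak $L^2$ convergence of $u_3^\varepsilon$'', but since $u^\varepsilon = u^* + \varepsilon\widetilde{u}$ with $\widetilde{u}$ fixed, the convergence $u_3^\varepsilon\to u_3^*$ is in fact strong in $L^2(\mathcal{U})$, so this obstacle does not actually arise.
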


Before establishing the first-order necessary optimality conditions, we state the following proposition.

%%%%%%%%%%%%%% Prop 4
\begin{proposition}\label{P4}
Let the assumptions of Corollary~\ref{C1} hold, and let $\widetilde{u} \in \bigl(L^2(\mathcal{U})\bigr)^3$. 
Then the Gâteaux derivative of the control-to-state mapping at $u^*$, applied to $\widetilde{u}$, i.e.\ $\Phi^{\prime}(u^*) \widetilde{u}$, is the solution to the following linearized system
\begin{equation}\label{E5.5}
\left\{\begin{aligned}
   \frac{\partial Y_S}{\partial t}
    &= \lambda_S \Delta Y_S - \bigl(\mu + u_1^* + \beta_1(1-u_3^*)E^* + \beta_2(1-u_3^*)I^*\bigr) Y_S 
    - \beta_1(1-u_3^*)S^* Y_E\\
    & \qquad - \beta_2(1-u_3^*)S^* Y_I + \beta_1\widetilde{u}_3S^*E^* + \beta_2\widetilde{u}_3S^*I^* + \rho Y_Q - \widetilde{u}_1S^*,\\
%%%%%%
\frac{\partial Y_E}{\partial t} &= \lambda_E \Delta Y_E + \beta_1(1-u_3^*)E^* Y_S 
    + \bigl[\beta_1(1-u_3^*)S^* - (\delta + \mu)\bigr]Y_E - \beta_1\widetilde{u}_3S^*E^*,\\
%%%%%%%%
  \frac{\partial Y_I}{\partial t} &= \lambda_I \Delta Y_I + \beta_2(1-u_3^*)I^* Y_S + \delta Y_E 
  + \bigl[\beta_2(1-u_3^*)S^* - (\gamma + \mu)\bigr]Y_I\\
   & \qquad - \beta_2\widetilde{u}_3S^*I^*,\\
%%%%%
\frac{\partial Y_Q}{\partial t} &= \lambda_Q \Delta Y_Q + \gamma Y_I - (\alpha + \rho + \mu + u_2^*)Y_Q - \widetilde{u}_2Q^*,\\
%%%%%%%%%
\frac{\partial Y_R}{\partial t} &= \lambda_R \Delta Y_R + \alpha Y_Q + u_1^* Y_S + u_2^* Y_Q - \mu Y_R + \widetilde{u}_1S^* + \widetilde{u}_2Q^*,
\end{aligned}\right. \quad \text{ in } \mathcal{U}, 
\end{equation}
with
\begin{equation}\label{E5.6}
\left\{\begin{aligned}
&\nabla Y_S \cdot \vec{n} = \nabla Y_E \cdot \vec{n} = \nabla Y_I \cdot \vec{n} = \nabla Y_Q \cdot \vec{n} = \nabla Y_R \cdot \vec{n} = 0, & & \text{on } \Sigma_T,\\
&Y_S(0,x) = Y_E(0,x) = Y_I(0,x) = Y_Q(0,x) = Y_R(0,x) = 0,  & &\text{ in } \Omega.
\end{aligned}\right. 
\end{equation}
\end{proposition}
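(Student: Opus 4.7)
The plan is to derive the PDE system satisfied by the difference quotient $Y^\varepsilon := (\psi^\varepsilon - \psi^*)/\varepsilon$, and then pass to the limit $\varepsilon \to 0^+$ to identify the Gâteaux derivative $\Phi'(u^*)\widetilde{u}$ with the solution of (\ref{E5.5})--(\ref{E5.6}). First, I would subtract the state system (\ref{E2.1})--(\ref{E2.2}) written for $(\psi^*, u^*)$ from the same system written for $(\psi^\varepsilon, u^\varepsilon)$ with $u^\varepsilon = u^* + \varepsilon \widetilde{u}$, and divide by $\varepsilon$. Every linear, diffusive, and constant-coefficient term carries through directly, while the homogeneous Neumann conditions and vanishing initial data in (\ref{E5.6}) are inherited trivially since $\psi^\varepsilon$ and $\psi^*$ share identical initial and boundary data.

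The nonlinear contributions require telescoping-type decompositions that isolate the difference quotients. For the bilinear control--state terms, I would use $u_1^\varepsilon S^\varepsilon - u_1^* S^* = u_1^\varepsilon(S^\varepsilon - S^*) + \varepsilon \widetilde{u}_1 S^*$, which after division by $\varepsilon$ becomes $u_1^\varepsilon Y_S^\varepsilon + \widetilde{u}_1 S^*$, with an analogous decomposition for $u_2^\varepsilon Q^\varepsilon$. For the trilinear incidence terms, the identity
\begin{equation*}
(1-u_3^\varepsilon)S^\varepsilon E^\varepsilon - (1-u_3^*)S^* E^* = (1-u_3^\varepsilon)E^\varepsilon(S^\varepsilon-S^*) + (1-u_3^\varepsilon)S^*(E^\varepsilon-E^*) - \varepsilon\widetilde{u}_3 S^* E^*
\end{equation*}
divided by $\varepsilon$ yields three terms involving components of $Y^\varepsilon$ and the forcing $\widetilde{u}_3 S^* E^*$. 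Treating the $\beta_2$-incidence term identically and collecting all such contributions produces a linear reaction-diffusion system for $Y^\varepsilon$ whose coefficients depend on $(\psi^\varepsilon, \psi^*, u^\varepsilon, u^*)$ and which reduces to (\ref{E5.5}) once the $\varepsilon$-dependent coefficients are replaced by their $\varepsilon \to 0$ limits.

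The main obstacle is rigorously justifying this limit passage. The first step would be to establish continuous dependence, namely $\psi^\varepsilon \to \psi^*$ strongly in $C([0,T]; L^2(\Omega)) \cap L^2(0,T; H^1(\Omega))$, via standard energy estimates on $\psi^\varepsilon - \psi^*$, using the uniform $L^\infty$-bounds of Proposition~\ref{P1} and Gronwall's inequality. Next, I would derive uniform-in-$\varepsilon$ a priori estimates on $Y^\varepsilon$ in the spaces of Proposition~\ref{P3}, which is possible because the coefficients of the $Y^\varepsilon$-system are uniformly bounded in $L^\infty(\mathcal{U})$. Weak and weak-$\ast$ compactness then produce a subsequence $Y^\varepsilon \rightharpoonup Y$, and Lemma~\ref{L1} guarantees the full sequence converges in $(L^2(\mathcal{U}))^5$ to the Gâteaux derivative. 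The most delicate point is handling products such as $(1-u_3^\varepsilon)E^\varepsilon Y_S^\varepsilon$: here the strong convergence $(1-u_3^\varepsilon)E^\varepsilon \to (1-u_3^*)E^*$ in $L^2(\mathcal{U})$, combined with the weak convergence of $Y_S^\varepsilon$ in $L^2(0,T; H^1(\Omega))$ together with the $L^\infty$-bound on $E^\varepsilon$, ensures that the product converges weakly in $L^2(\mathcal{U})$ to $(1-u_3^*)E^* Y_S$. Identical reasoning disposes of the remaining nonlinear couplings, and a standard weak-limit argument in the distributional formulation of the PDE shows that $Y$ solves (\ref{E5.5})--(\ref{E5.6}). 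By uniqueness of the solution to this linear parabolic system (Corollary~\ref{C1} applied to the linearized operator), $Y = \Phi'(u^*)\widetilde{u}$, which completes the proof.
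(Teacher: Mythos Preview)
Your proposal is correct and follows essentially the same route as the paper: define the incremental quotient $Y^\varepsilon$, derive its PDE by subtracting the two state systems and telescoping the nonlinear terms, obtain uniform estimates, and pass to the limit using compactness and uniqueness of the linearized system. In fact the paper only sketches these five steps and defers the details to \cite{Zhou2024,Zhou2019}, so your explicit telescoping identities and your treatment of the product terms $(1-u_3^\varepsilon)E^\varepsilon Y_S^\varepsilon$ via strong--weak convergence already go somewhat beyond what the paper spells out.
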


\begin{proof}[{Proof of Lemma~\ref{L1} and Proposition~\ref{P4}}]
To avoid making the paper longer, we briefly outline the main steps of the proof using the same methodology described in \cite[pages 21--25]{Zhou2024} and \cite[pages 456--458]{Zhou2019}. The proof proceeds as follows:

\textbf{Step 1} (\textit{Linearization via incremental quotient}).
For any perturbation $\widetilde{u} \in \bigl(L^2(\mathcal{U})\bigr)^3$, we define the incremental quotient
\begin{equation*}
Y^\varepsilon = \frac{\Phi(u^\varepsilon) - \Phi(u^*)}{\varepsilon}.
\end{equation*}

\textbf{Step 2} (\textit{Derivation of the linearized system}).
We show that the limit
\begin{equation*}
Y = \lim_{\varepsilon \to 0} Y^\varepsilon,
\end{equation*}
exists and satisfies a linearized system with zero initial data and homogeneous Neumann boundary conditions. This system depends linearly on $\widetilde{u}$ and the original solution $(S, E, I, Q, R)$.

\textbf{Step 3} (\textit{Existence and uniqueness}).
Using standard theory of linear parabolic systems, it is shown that this linearized system admits a unique strong solution.

\textbf{Step 4} (\textit{Convergence of the incremental quotients}).
It is then proven that
\begin{equation*}
    \lim_{\varepsilon \to 0} \| Y^\varepsilon - Y \|_{(L^2(\mathcal{U}))^5} = 0,
\end{equation*}
establishing the Gâteaux differentiability of $\Phi$, with $\Phi'(u^*) \widetilde{u} = Y$.

\textbf{Step 5} (\textit{Boundedness of the derivative}).
Finally, we confirm that $\Phi'(u^*)$ defines a bounded linear operator from $\bigl(L^2(\mathcal{U})\bigr)^3$ to $\bigl(L^2(\mathcal{U})\bigr)^5$.
%completing the proof.
\end{proof}

The following theorem establishes the first-order necessary conditions for optimality and provides a pointwise characterization of the optimal control in the absence of terminal penalization.

%%%%%%%%%%%%%%%%%%%%%%%%%%%%%%%%%%%%%%%%%%%%%
\begin{theorem}\label{T2}
Let $(S^*,E^*,I^*,Q^*,R^*, u_1^*,u_2^*,u_3^*)$ be an optimal pair for the control problem~\eqref{E2.1}--\eqref{E2.2} and $(P_S,P_E,P_I,P_Q,P_R)$ be the solution of the adjoint system~\eqref{E5.1}--\eqref{E5.3}. 
Then, for any $(u_1, u_2, u_3) \in \mathcal{V}_{ad}$, the following inequality holds
\begin{equation}\label{E5.7}
\begin{aligned}
  \int_0^T \int_\Omega & (S^* P_S - S^* P_R + w_1) (u_1 - u_1^*) \,dx dt
      + \int_0^T \int_\Omega (Q^* P_Q - Q^* P_R + w_2) (u_2 - u_2^*) \,dx dt\\
  &+ \int_0^T \int_\Omega \bigl(\beta_1 S^*E^* (P_E - P_S) + \beta_2S^*I^* (P_I - P_S) + w_3\bigr) (u_3 - u_3^*) \,dx dt\\
\geq& - \int_\Omega \sigma_1 (u_1 - u_1^*)(T,x) \, dx - \int_\Omega  \sigma_2 (u_2 - u_2^*)(T,x) \, dx - \int_\Omega \sigma_3 (u_3 - u_3^*)(T,x) \, dx.
\end{aligned}
\end{equation}
Furthermore, if $\sigma_1 = \sigma_2 = \sigma_3 \equiv 0$, then the optimal control $(u_1^*,u_2^*,u_3^*)$ can be characterized point-wise by
\begin{equation}\label{E5.8}
\begin{aligned}
u_1^*(t,x) &= \begin{cases}
1, & \text{if } (S^*P_S - S^*P_R + w_1)(t,x) \leq 0,\\
0, & \text{otherwise},
\end{cases}\\
u_2^*(t,x) &= \begin{cases}
1, & \text{if } (Q^*P_Q - Q^*P_R + w_2)(t,x) \leq 0,\\
0, & \text{otherwise},
\end{cases}\\
u_3^*(t,x) &= \begin{cases}
1, & \text{if } (\beta_1S^*E^*(P_E - P_S) + \beta_2S^*I^*(P_I - P_S) + w_3)(t,x) \leq 0,\\
0, & \text{otherwise}.
\end{cases}
\end{aligned}
\end{equation}
\end{theorem}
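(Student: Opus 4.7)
The plan is to use the convex perturbation method together with the adjoint system. Since $\mathcal{V}_{ad}$ is convex, for any $u=(u_1,u_2,u_3)\in\mathcal{V}_{ad}$ the admissible perturbation $u^\varepsilon := u^* + \varepsilon(u-u^*)$ lies in $\mathcal{V}_{ad}$ for $\varepsilon\in[0,1]$, so the optimality of $u^*$ yields the first-order condition $\tfrac{d}{d\varepsilon}\mathcal{J}(\psi^\varepsilon,u^\varepsilon)\big|_{\varepsilon=0^+}\geq 0$. By Lemma~\ref{L1} and Proposition~\ref{P4}, the chain rule applied to the explicit form~\eqref{E2.3} of $\mathcal{J}$ expresses this derivative as a sum of running and terminal integrals involving the sensitivities $Y_E+Y_I$, $Y_Q$ and the control perturbations $\widetilde{u}_i := u_i - u_i^*$ weighted by $w_i$ in $\mathcal{U}$ and by $\sigma_i$ at $t=T$, together with the running source $\kappa_1(Y_E+Y_I)+\kappa_2 Y_Q$ and the terminal source $\kappa_3(Y_E+Y_I)(T)+\kappa_4 Y_Q(T)$ coming from differentiating $\mathcal{A}_1,\mathcal{A}_2$ in the state.

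The crux is to eliminate the $Y$-terms using the adjoint system. The key computation is to test each of the five equations of the linearized system~\eqref{E5.5} against the matching adjoint variable $P_{\psi_i}$ from~\eqref{E5.1}, integrate over $\mathcal{U}$, apply integration by parts in time (using $Y(0,\cdot)=0$), and invoke Green's identity in space; the homogeneous Neumann conditions~\eqref{E5.2} and~\eqref{E5.6} kill every surface contribution. Summing the five identities, the coefficient structure of~\eqref{E5.1} is engineered so that each bilinear $Y$–$P$ interaction cancels pairwise, for instance the term $\beta_1(1-u_3^*)E^* Y_S P_E$ produced by testing the $E$-equation of~\eqref{E5.5} against $P_E$ cancels the matching contribution from testing the adjoint $S$-equation against $Y_S$. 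What survives is the running source $\int_0^T\!\int_\Omega[\kappa_1(Y_E+Y_I)+\kappa_2 Y_Q]\,dxdt$ (generated by the $\kappa_i$ terms in~\eqref{E5.1}); the terminal trace $\int_\Omega\sum_i P_{\psi_i}(T)Y_i(T)\,dx$, which via~\eqref{E5.3} reduces to $-\int_\Omega[\kappa_3(Y_E+Y_I)+\kappa_4 Y_Q](T,x)\,dx$; and the control-perturbation contributions paired with $\widetilde{u}_1,\widetilde{u}_2,\widetilde{u}_3$ arising from the $-\widetilde{u}_1 S^*$, $-\widetilde{u}_2 Q^*$ and $\beta_j\widetilde{u}_3 S^*(\cdot)$ source terms in~\eqref{E5.5}. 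Substituting this identity into the directional-derivative inequality cancels every $Y$-contribution and collapses the estimate to~\eqref{E5.7}.

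For the pointwise characterization, setting $\sigma_1=\sigma_2=\sigma_3\equiv 0$ makes the terminal integrals on the right-hand side of~\eqref{E5.7} vanish, leaving three decoupled inequalities of the form $\int_0^T\!\int_\Omega \Xi_i(t,x)(u_i - u_i^*)\,dxdt \geq 0$ that must hold for all $u_i\in[0,1]$ a.e.\ in $\mathcal{U}$. A standard localization argument, taking $u_i$ that agrees with $u_i^*$ outside an arbitrarily small measurable set on which $\Xi_i$ has constant sign, forces the bang-bang rule $u_i^*=1$ where $\Xi_i\leq 0$ and $u_i^*=0$ elsewhere, which is precisely~\eqref{E5.8}. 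The main technical obstacle is the bookkeeping of the adjoint cancellation: when testing the five coupled linearized equations against the five coupled adjoint equations, one must verify that every bilinear $Y$–$P$ cross-term cancels exactly, which explains the precise form of the coupling coefficients in~\eqref{E5.1}. The regularity provided by Corollaries~\ref{C1} and~\ref{C2} guarantees that all the resulting products and integration-by-parts identities are legitimate in $L^2(\mathcal{U})$.
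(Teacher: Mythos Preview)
Your proposal is correct and follows essentially the same route as the paper's proof in Appendix~\ref{App1}: convex perturbation $u^\varepsilon = u^* + \varepsilon(u-u^*)$, passage to the limit via Lemma~\ref{L1}/Proposition~\ref{P4} to obtain the directional-derivative inequality, pairing of the linearized system~\eqref{E5.5} with the adjoint system~\eqref{E5.1} through integration by parts in time and Green's formula in space so that all bilinear $Y$--$P$ terms cancel, and finally the bang-bang characterization via decoupling and a sign-set argument. The only cosmetic difference is that the paper multiplies both~\eqref{E5.1} by $Y$ and~\eqref{E5.5} by $P$ and adds (your single-sided testing plus time integration by parts is equivalent), and it spells out the terminal-trace convergence $Y^\varepsilon(T)\to Y(T)$ via Ascoli--Arzel\`a, which you absorb into the invocation of Lemma~\ref{L1}.
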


The proof of this theorem is deferred to Appendix~\ref{App1} to maintain the flow of this section.

%\newpage

% ================================================================ Section 6
\section{Numerical Results}\label{S6}
%\section{Numerical Simulations}\label{S6}
This section aims to investigate the dynamics of the proposed SEIQR reaction-diffusion model with optimal control strategies using numerical methods.
Specifically, we will evaluate the spatial-temporal evolution of the disease in a two-dimensional urban domain under various intervention scenarios, including the presence or absence of vaccination, treatment, and social distancing.
We place special emphasis on quantifying the impact of spatial diffusion and control measures on infection containment over a finite time horizon.

We consider the square spatial domain $\Omega = [0,50] \times [0,50]$ km$^2$, representing a densely populated urban area. The domain is discretized into $1 \text{ km} \times 1 \text{ km}$ grid cells.
At the initial time, $t = 1$, we assume the population is uniformly distributed with 100 susceptible individuals per cell across $\Omega$ (i.e., 100\,\% of the population), except in a central subregion $\Omega_c = \text{cell}(15,15)$, which represents a city center, for example.
In this subdomain, 15\,\% of the local population is initialized as exposed, 10\% as infected, and the remaining 75\,\% as susceptible. The simulation period spans 60 days.

The spatial diffusion of the disease is driven by the movement of individuals and is modeled through diffusion terms in the SEIQR system.
We use a finite difference method (FDM) for spatial discretization and an implicit time-stepping scheme to ensure stability.
We solve the optimal control problem iteratively using a forward–backward sweep algorithm, updating the control functions at each iteration according to the characterizations derived in Section~\ref{S5}.
Using a forward approach in time solves the state problem, while a backward approach solves the dual system according to the transversality conditions.

We now define the parameter values that govern the dynamics of the SEIQR reaction-diffusion system.
Since there is no empirical data available for this specific setting, we have adopted biologically plausible parameter values capable of capturing the qualitative behavior of disease transmission and control. Each parameter is chosen to reflect commonly observed epidemiological patterns and intervention mechanisms, ensuring consistency with real-world expectations.

\begin{itemize}
\item In this model, the recruitment rate $\Lambda$ represents the influx of new individuals into the population through birth or immigration.
The natural death rate  $\mu$ is assumed to be small and constant because it reflects standard demographic attrition unrelated to the disease.

\item Regarding disease transmission, the model distinguishes two primary pathways:
The first is the interaction between susceptible and exposed individuals, governed by $\beta_1$, and the second is the interaction between susceptible and infected individuals, represented by $\beta_2$.
Biologically, it is expected that $\beta_2 > \beta_1$, since symptomatic infectious individuals are more likely to transmit the disease than those in the latent phase.
This reflects the increased viral shedding and higher contact rates associated with the symptomatic stage.

\item The progression from the exposed state to the infectious state occurs at a rate of $\delta$, depending on the disease's incubation period.
Once individuals are infectious, they may be detected and quarantined at a rate of $\gamma$, 
which is influenced by testing capacity and contact tracing efficiency.
Usually, $\gamma$ is smaller than $\delta$ because quarantine depends on public health interventions, while disease progression is a natural biological process.

\item Quarantined individuals can either recover at rate $\alpha$, or, if not infected,
return to the susceptible pool at rate $\rho$. 
Generally, $\alpha$ exceeds $\rho$, as recovery from illness is more likely to occur within quarantine than reclassification of individuals who were mistakenly quarantined as susceptible.
\end{itemize}
These inter-parameter relationships, which are grounded in biological reasoning and public health practice, form the basis of the numerical simulations. 
They allow us to investigate the spatiotemporal evolution of the epidemic under different intervention strategies.
Table~\ref{Tab3} summarizes the parameter values used in our simulations.

%%%%% Table 3 %%%%%%%
\begin{table}[H]
\centering
\setlength{\tabcolsep}{0.8cm}
\caption{Parameter values for the SEIQR model~\eqref{E2.1}.}\label{Tab3}
\adjustbox{max width=\textwidth}{
\begin{tabular}{ccc||ccc}
\hline 
\textbf{Symbol} & \textbf{Value} & \textbf{Source} & \textbf{Symbol} & \textbf{Value} & \textbf{Source} \\
\hline \hline 
$\Lambda$ & 1 & \cite{Khanh2016} & $\delta$ & 0.05 & \cite{Ozalp2011}\\
\hline
$\beta_1$ & 0.06 & Assumed & $\gamma$ & 0.02 & \cite{LemosPaio2022}\\
\hline
$\beta_2$ & 0.07 & Assumed & $\alpha$ & 0.05 & \cite{Paul2022}\\
\hline
$\mu$ & 0.01 & \cite{Kumar2023} & $\rho$ & 0.01 & \cite{Khanh2016}\\
\hline
$\lambda_S, \lambda_E, \lambda_I, \lambda_R$ & 0.1 & \cite{SidiAmmi2022} & $\lambda_Q$ & 0.001 & Assumed\\
\hline
\end{tabular}
}
\end{table}

In real-world scenarios, quarantined individuals are expected to remain in fixed locations, such as hospitals or isolation facilities. This results in negligible spatial movement.
We therefore assume that the diffusion coefficient for the quarantine compartment is $\lambda_Q = 0.001$, while all other compartments are assigned a diffusion coefficient of 0.1 to reflect the moderate mobility of individuals in the general population.

To evaluate the effectiveness of various intervention strategies in controlling the epidemic, we analyze eight scenarios that incorporate different combinations of control measures, $u_1$, $u_2$, and $u_3$.
These controls represent key pharmaceutical interventions (PIs) and non-pharmaceutical interventions (NPIs), as detailed in Table~\ref{Tab2}. 
Specifically, $u_1$ corresponds to vaccination of susceptible and recovered individuals to increase population immunity, $u_2$ models treatment of quarantined individuals to accelerate recovery, and $u_3$ denotes social distancing and public awareness measures that reduce effective contact rates among susceptible, exposed, and infected individuals.
We simulate the epidemic dynamics under the following eight cases:
\begin{itemize}
\item[-] \textbf{Case 1:} no control ($u_1 = u_2 = u_3 = 0$); baseline scenario.
\item[-] \textbf{Case 2:} only vaccination ($u_1$ active).
\item[-] \textbf{Case 3:} only treatment ($u_2$ active).
\item[-] \textbf{Case 4:} only social distancing ($u_3$ active).
\item[-] \textbf{Case 5:} vaccination and treatment ($u_1$ and $u_2$ active).
\item[-] \textbf{Case 6:} vaccination and social distancing ($u_1$ and $u_3$ active).
\item[-] \textbf{Case 7:} treatment and social distancing ($u_2$ and $u_3$ active).
\item[-] \textbf{Case 8:} all controls are active ($u_1$, $u_2$, and $u_3$).
\end{itemize}
We evaluate how each control, individually or in combination, influences infection prevalence, recovery dynamics, and overall system behavior as governed by the model~\eqref{E2.1}–\eqref{E2.2}.  by comparing these cases.
This analysis reveals the impact of vaccination, treatment, and social distancing on reducing disease spread and improving epidemic outcomes.

The results below present a comparative analysis of disease progression with and without optimal control interventions, demonstrating the role of each control component in minimizing both the infection spread and the total cost functional $\mathcal{J}$.

Figure~\ref{F3} presents the evolution of the SEIQR system in the absence of any intervention (i.e., $u_1 = u_2 = u_3 = 0$). The epidemic spreads rapidly across the spatial domain, 
 reaching high peak levels of infection. 
 Without vaccination, treatment, or quarantine, the disease grows uncontrollably over time and space. This scenario leads to the highest value of $\mathcal{J}$, as shown in Table~\ref{Tab4}.

\begin{figure}[H]
\centering
\includegraphics[width=1\textwidth]{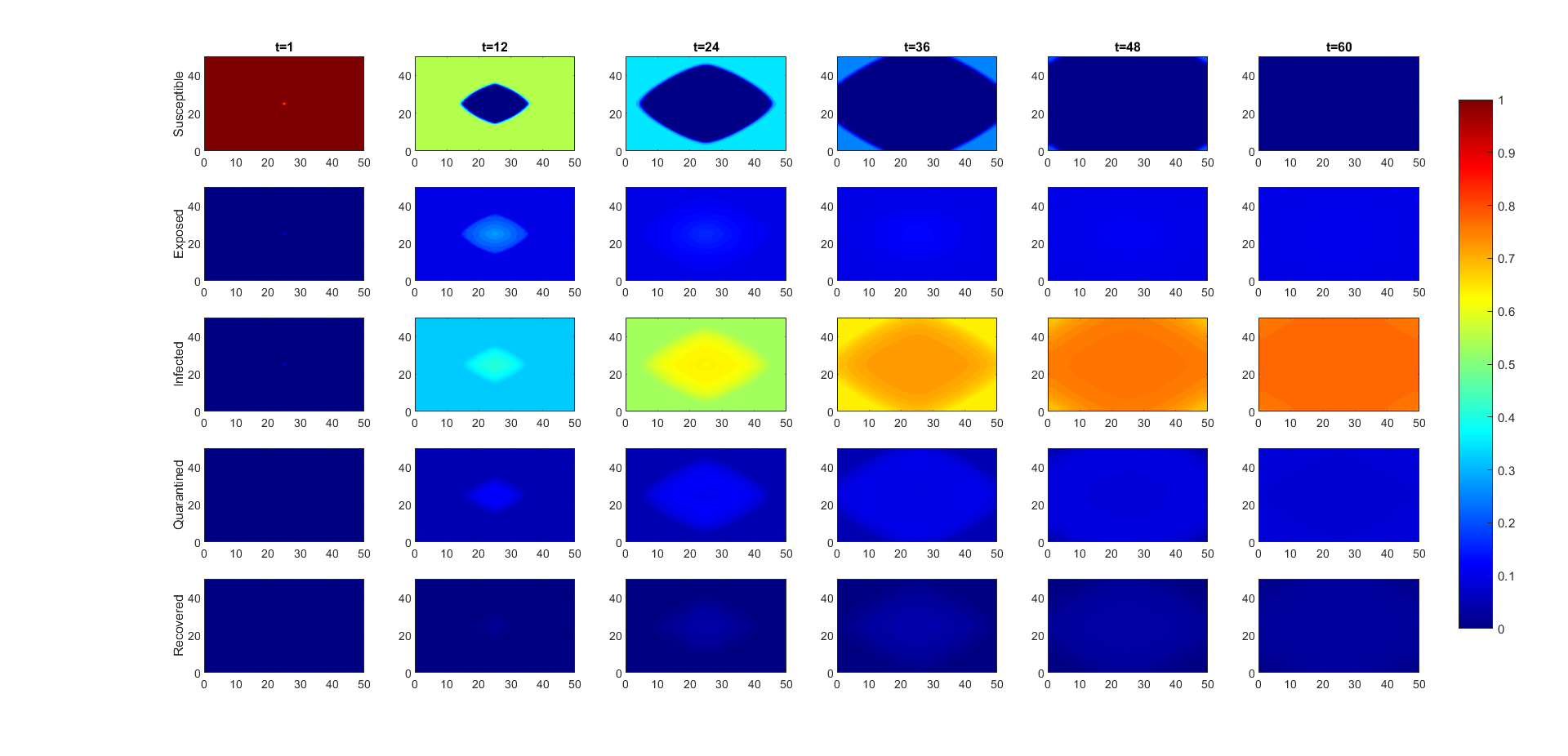}
\caption{Spatiotemporal evolution of the SEIQR model without any control interventions.}\label{F3}
\end{figure}

Figure~\ref{F4} illustrates the scenario in which only the vaccination strategy ($u_1 \neq 0$, $u_2 = u_3 = 0$) is applied. 
Vaccination reduces the susceptible population, which slows the emergence of new exposed individuals.
However, since neither treatment nor social distancing is used, infected individuals continue to spread the disease.
As shown in this figure, the spatial extent of infection is smaller than in Figure~\ref{F3}, and the total cost $\mathcal{J}$ is moderately reduced.

\begin{figure}[H]
\centering
\includegraphics[width=1\textwidth]{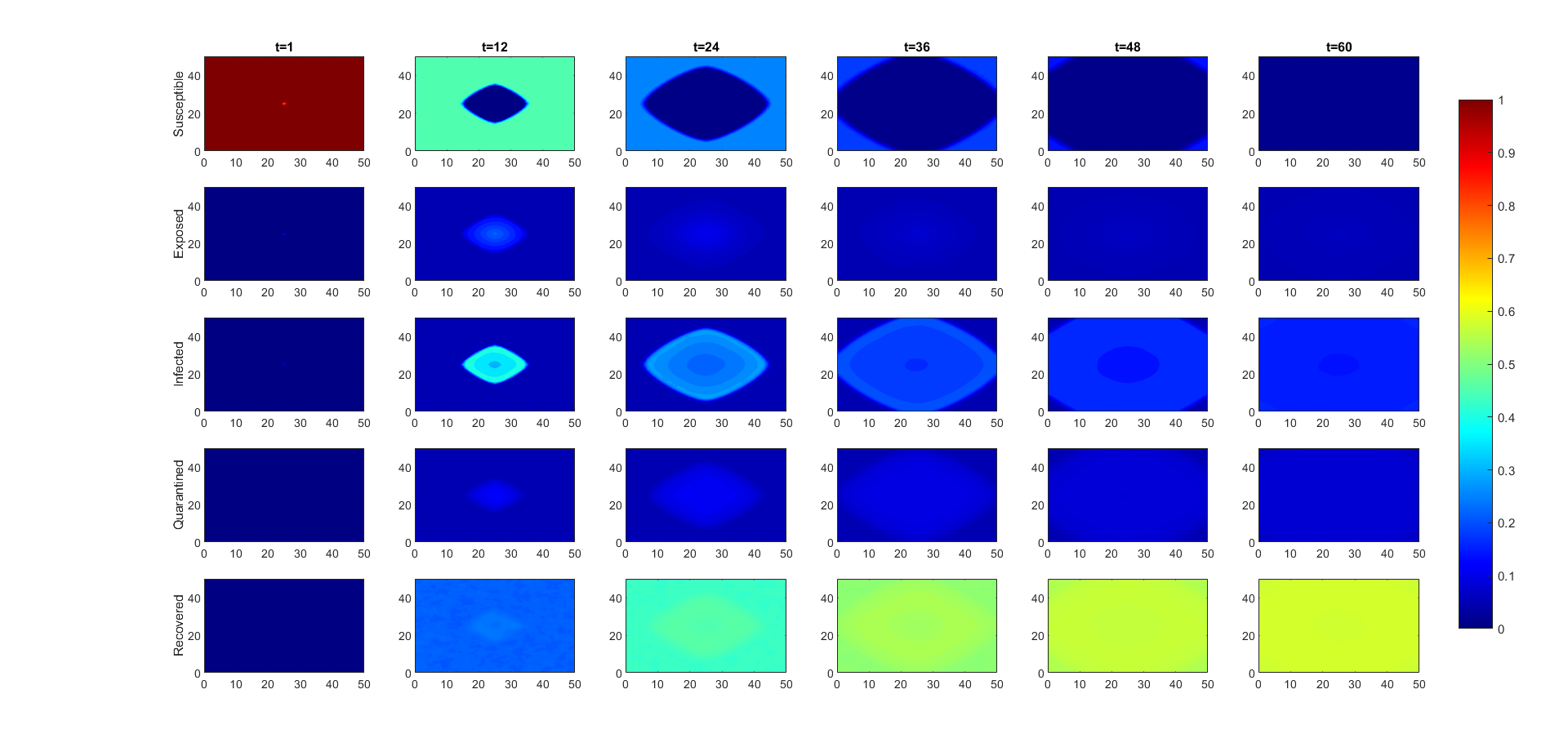}
\caption{Effect of the vaccination strategy (only $u_1$) on the spatiotemporal dynamics of the SEIQR model.}\label{F4}
\end{figure}

Figure~\ref{F5} shows the implementation of the quarantine treatment only 
 ($u_2 \neq 0$, $u_1 = u_3 = 0$). 
This scenario reduces the number of infected individuals but fails to prevent new exposures. Compared to Figure~\ref{F3}, the infected zones are less intense, and the cost functional is lower; however, the epidemic persists spatially and temporally.

\begin{figure}[H]
\centering
\includegraphics[width=1\textwidth]{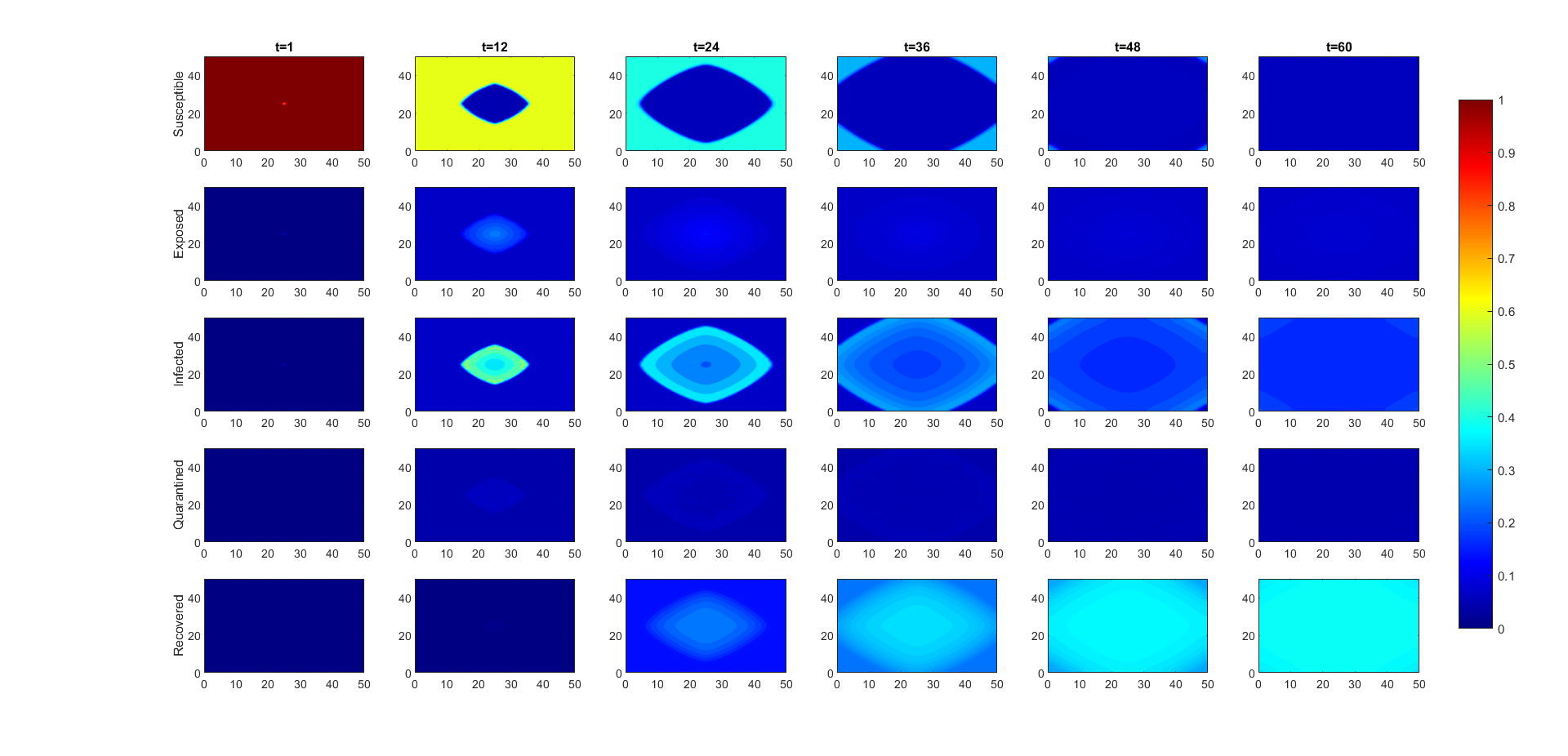}
\caption{Effect of the treatment strategy (only $u_2$) on the spatiotemporal dynamics of the SEIQR model.}\label{F5}
\end{figure}

Figure~\ref{F6} shows the impact of applying social distancing exclusively to the exposed and infected individuals ($u_3 \neq 0$, $u_1 = u_2 = 0$). 
As shown in the figure, public awareness is an effective strategy for limiting progression to exposed and infected rooms, thereby reducing overall disease spread.
The epidemic remains more spatially contained, and the value of the objective functional decreases significantly compared to the no-control scenario shown in Figure~\ref{F3}.

\begin{figure}[H]
\centering
\includegraphics[width=1\textwidth]{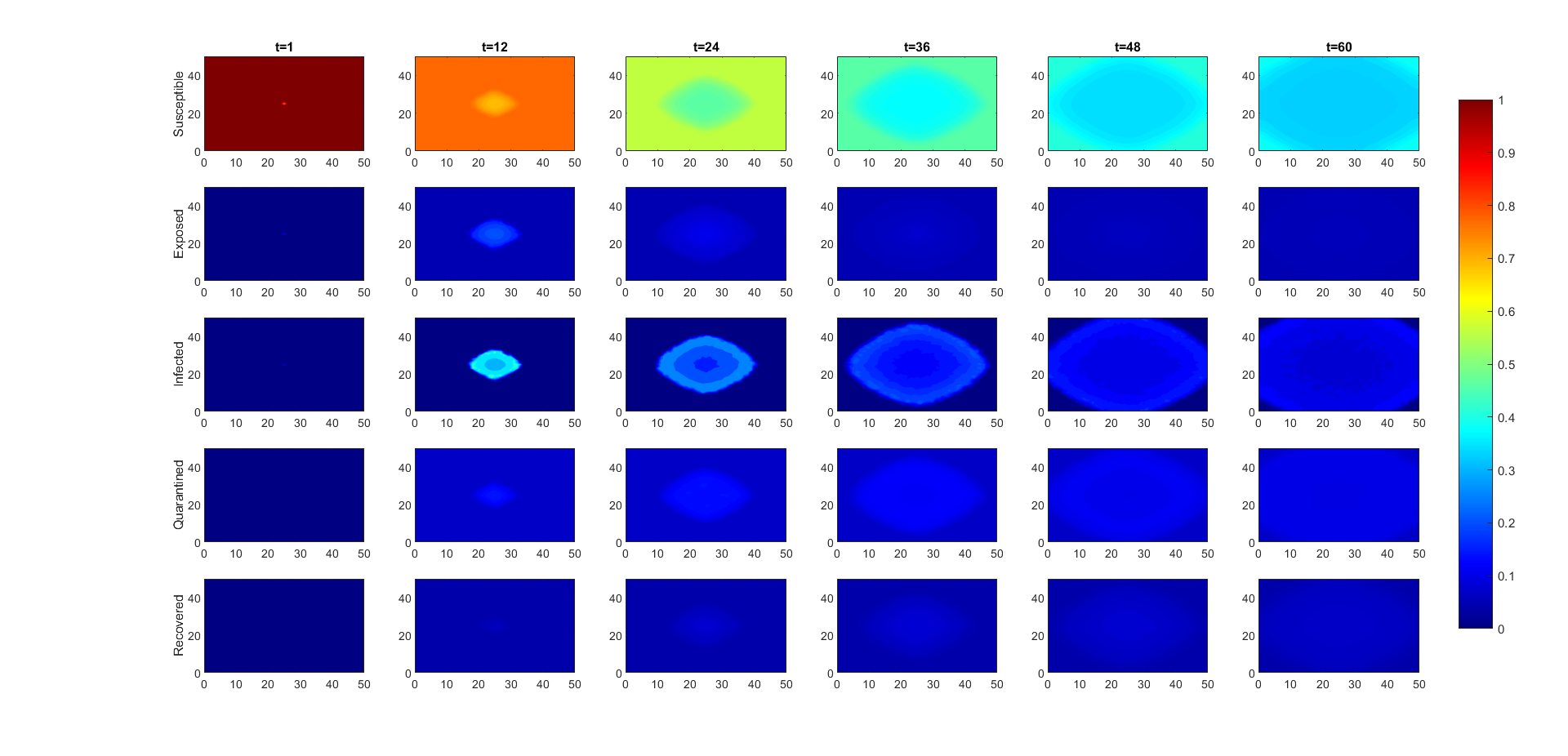}
\caption{Effect of the social distancing strategy (only $u_3$) on the spatiotemporal dynamics of the SEIQR model.}\label{F6}
\end{figure}

Figure~\ref{F7} presents the combined impact of vaccination and treatment ($u_1, u_2 \neq 0$, $u_3 = 0$). 
This combination effectively reduces both the number of susceptible and quarantined individuals, and shortens the duration of infection. 
As observed in this case, the disease spread is more limited than in the scenarios shown in Figures~\ref{F4} and \ref{F5}, in which only one intervention was applied.
Among the partial control strategies, this case exhibits one of the lowest cost functional $\mathcal{J}$ values, as shown in Table~\ref{Tab4}.

\begin{figure}[H]
\centering
\includegraphics[width=1\textwidth]{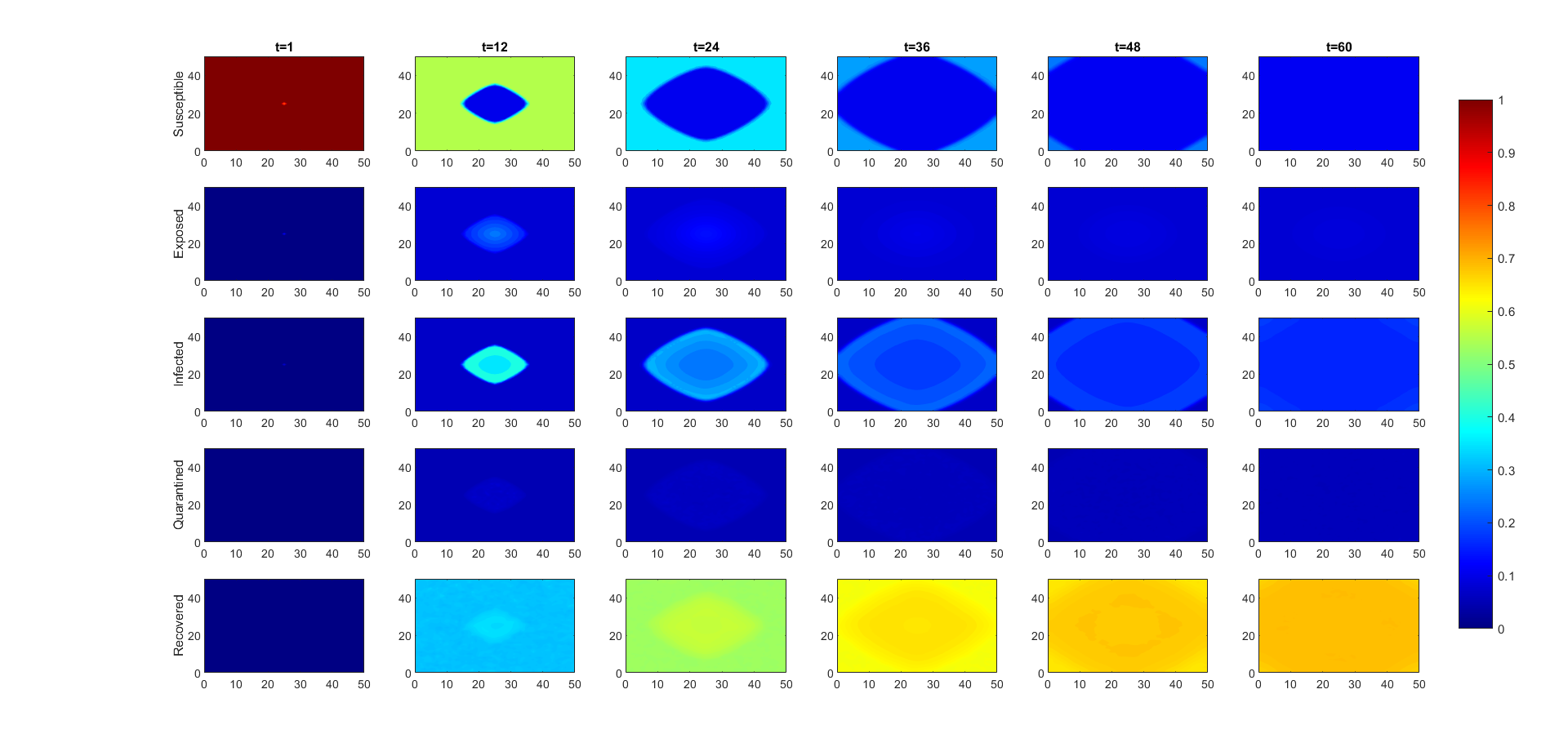}
\caption{Impact of combined vaccination and treatment strategies ($u_1$ and $u_2$) on the spatiotemporal evolution of the SEIQR model.}\label{F7}
\end{figure}

Figure~\ref{F8} shows the joint application of vaccination and social distancing ($u_1, u_3 \neq 0$, $u_2 = 0$). 
This strategy is particularly effective because it prevents new infections. 
The infected population is significantly reduced and the outbreak is largely suppressed.

\begin{figure}[H]
\centering
\includegraphics[width=1\textwidth]{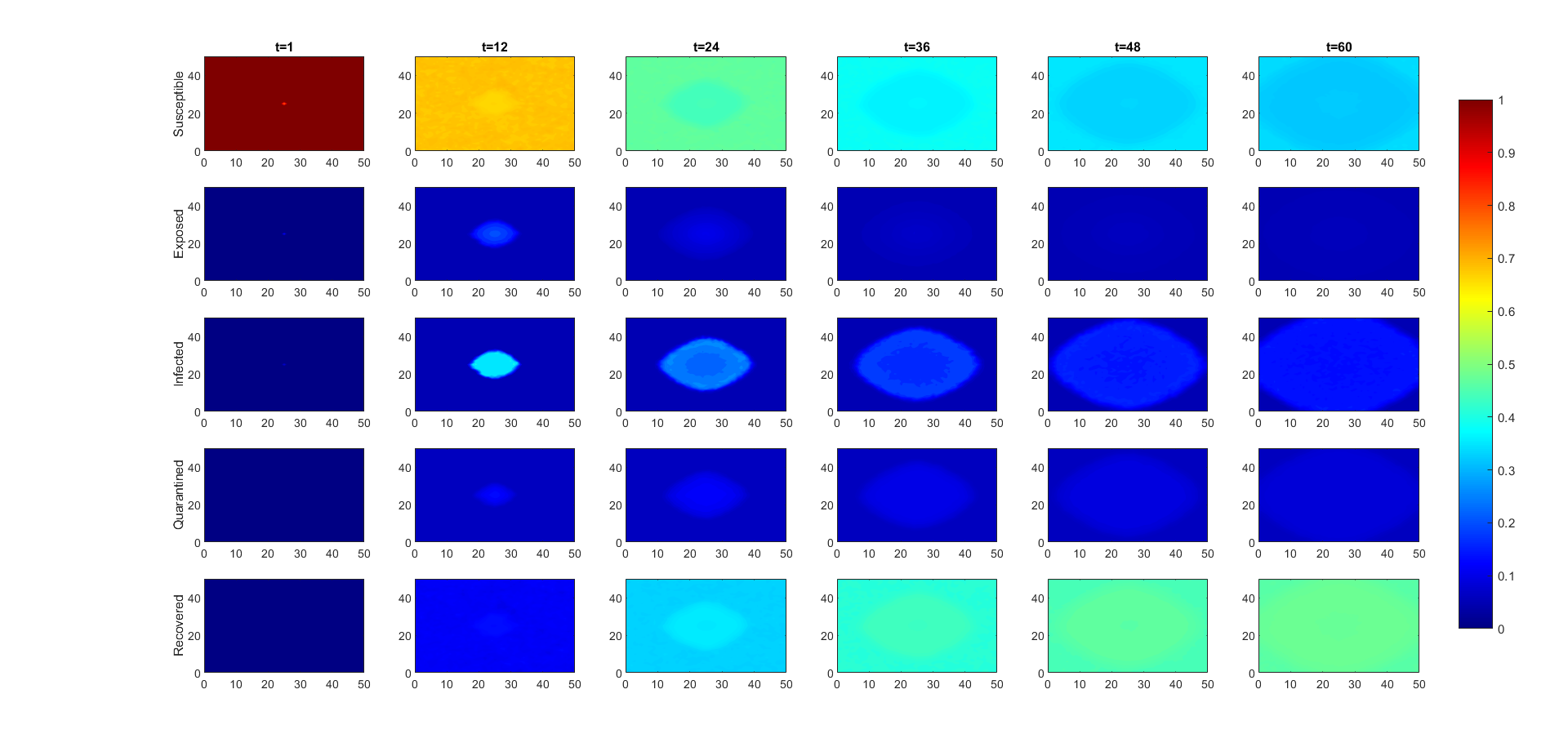}
\caption{Impact of combined vaccination and social distancing strategies ($u_1$ and $u_3$) on the spatiotemporal evolution of the SEIQR model.}\label{F8}
\end{figure}

Figure~\ref{F9} demonstrates the combined use of treatment and social distancing ($u_2, u_3 \neq 0$, $u_1 = 0$). 
Although vaccination is not employed, progression from the exposed to the infected state is blocked and infected individuals are treated.
As shown in this figure, the epidemic is mitigated, with reduced spatial propagation. 
This combination of controls yields a low value of $\mathcal{J}$, though not as low as in cases that include vaccination, like Figure~\ref{F7}.

\begin{figure}[H]
\centering
\includegraphics[width=1\textwidth]{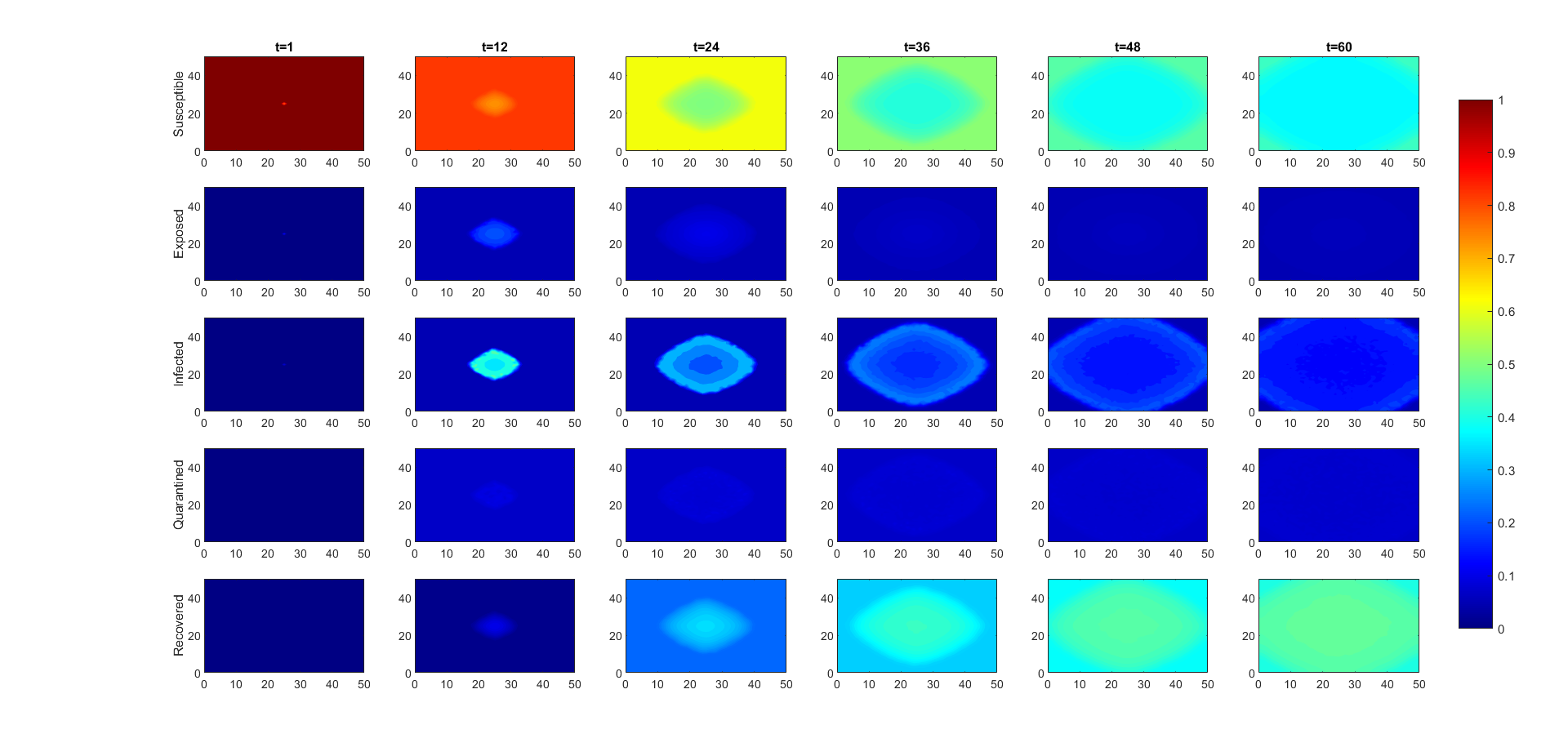}
\caption{Impact of combined treatment and social distancing strategies ($u_2$ and $u_3$) on the spatiotemporal evolution of the SEIQR model.}\label{F9}
\end{figure}

Figure~\ref{F10} shows that when all three controls are active ($u_1, u_2, u_3 \neq 0$), 
the most comprehensive approach is achieved. 
The results confirm that this strategy is the most effective.
The infection has nearly been eradicated and the spatial spread is minimal.
This leads to the lowest cost functional $\mathcal{J}$ among all scenarios,
demonstrating the importance of combining multiple interventions.

\begin{figure}[H]
\centering
\includegraphics[width=1\textwidth]{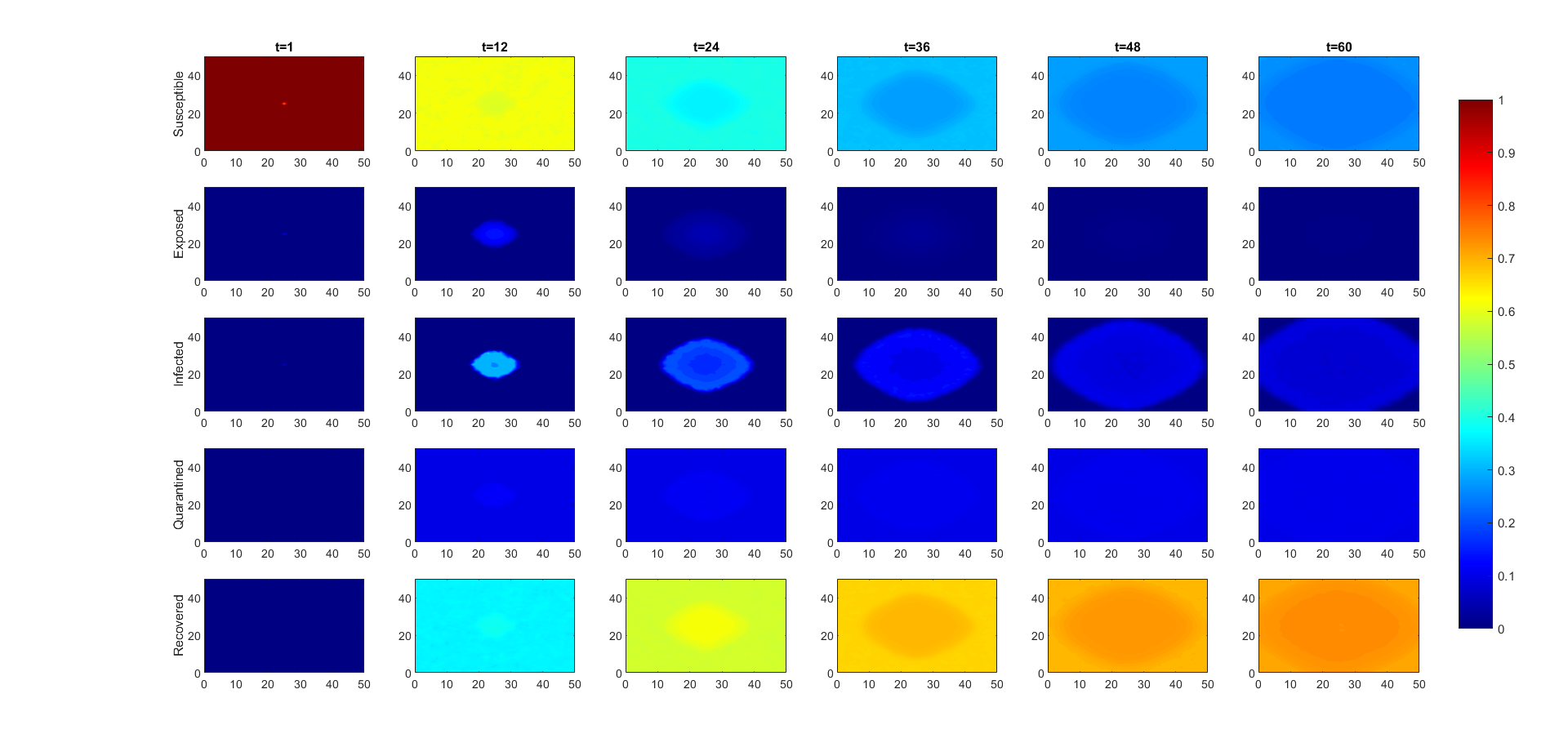}
\caption{Effect of combined vaccination, treatment, and social distancing strategies on the spatiotemporal dynamics of the SEIQR model.}\label{F10}
\end{figure}

Table~\ref{Tab4} provides a comparison of the cost functional $\mathcal{J}$ across all eight cases presented in Figures~\ref{F3}--\ref{F10}.
The results confirm that the full control case is optimal, 
while the absence of control results in the highest cost.

\begin{table}[H]
\centering
\caption{Cost functional $\mathcal{J}$ for various control strategies in the SEIQR model.}\label{Tab4}
\adjustbox{max width=\textwidth}{
\begin{tabular}{c||cccccccc}
\hline
\textbf{Cases} & (1) & (2) & (3) & (4) & (5) & (6) & (7) & (8)\\ 
\hline\hline
\textbf{Value of} & \multirow{2}{*}{$5.7377e^{+03}$} & \multirow{2}{*}{$2.2086e^{+03}$} & \multirow{2}{*}{$2.3499e^{+03}$} & \multirow{2}{*}{$2.5903e^{+03}$} & \multirow{2}{*}{$1.4402e^{+03}$} & \multirow{2}{*}{$1.6186e^{+03}$} & \multirow{2}{*}{$1.6632e^{+03}$} & \multirow{2}{*}{$1.3562e^{+03}$} \\ 
\textbf{$\mathcal{J}(\psi, u)$} &&&&&&&&\\
\hline
\end{tabular}
}
\end{table}

As a result, the ordering from the largest to the smallest value of the objective functional $\mathcal{J}$, along with the corresponding effectiveness of each strategy in controlling the spread of the disease, is as follows:
\begin{itemize}
\item[-] \textbf{Case 1:} In the absence of any intervention, the disease spreads freely, 
resulting in the highest epidemiological and economic cost.

\item[-] \textbf{Case 4:} Implementing distancing for exposed and infected individuals slightly mitigates transmission, but alone, it is insufficient to significantly control the outbreak.

\item[-] \textbf{Case 3:} Treatment reduces the infectious population but does not prevent new exposures.

\item[-] \textbf{Case 2:} Vaccination protects susceptible individuals, yielding a better outcome than treatment or distancing alone.

\item[-] \textbf{Case 7:} Combining reactive measures (treatment and distancing)
is more effective than either strategy individually, yet it still lacks prevention at the susceptible level.

\item[-] \textbf{Case 6:} Adding a preventive measure, such as like vaccination 
greatly reduces disease spread when combined with distancing.

\item[-] \textbf{Case 5:} This pairing (vaccination and  treatment) effectively tackles both susceptibility and infectiousness.

\item[-] \textbf{Case 8:} Applying all three controls simultaneously yields the most significant reduction in disease prevalence and control costs, resulting in the lowest value of the objective functional $\mathcal{J}$.
\end{itemize}

% ================================================================ Section 7
\section{Conclusion and Perspectives}\label{S7}
In this work, we developed and analyzed a reaction-diffusion SEIQR epidemic model that incorporates spatial heterogeneity and three time-dependent control strategies: vaccination, treatment, and social distancing.
Our primary objective was to investigate how these interventions, both individually and in combination, influence the spatiotemporal spread of an infectious disease.

First, we established the well-posedness of the model by proving the existence, uniqueness, positivity, and boundedness of the strong solution using semigroup theory.
Then, we demonstrated the existence of an optimal solution and derived the first-order necessary optimality conditions associated with the state model using a variational approach.

To validate and illustrate these findings, we conducted a series of numerical simulations over a spatially discretized urban domain.
The results show that all three interventions significantly reduce the burden of the epidemic, and combining them yields the best outcome.
Of the scenarios considered, the full control strategy was the most effective in minimizing both infection prevalence and the cost functional.

This study underscores the importance of integrating pharmaceutical and non-pharmaceutical interventions in spatial-temporal epidemic control.
Several promising directions can be pursued to enhance the model's applicability.
One natural extension is incorporating time delays to capture incubation periods, behavioral response lags, or delays in implementing control measures.
The model could also be enriched by accounting for mobility effects, vaccination waning, or imperfect efficacy-factors that better reflect real-world dynamics.
From a methodological perspective, introducing stochastic elements or quantifying parameter uncertainty would increase the robustness of the control strategies.
Finally, future research could explore developing data-driven feedback control laws and using machine learning techniques to calibrate model parameters and support real-time decision-making during outbreaks.

% ================================================================ Appendix 
\begin{appendices}

\section{Proof of the First-Order Necessary Conditions}\label{App1}
% In this appendix, 
Here we provide a detailed proof of Theorem~\ref{T2} presented in Section~\ref{S5} of the main text.

%%%%%%%%%%%%%%%%%%%%%%%%%%%
\begin{proof}[{Proof of Theorem~\ref{T2}}]
Let us assume that $(\psi^*, u^*)$ is an optimal pair for~\eqref{E2.1}--\eqref{E2.2}. Then, for all $\varepsilon > 0$, we have
\begin{equation*}
\mathcal{J}(\psi^*, u^*) \leq \mathcal{J}(\psi^{\varepsilon}, u^{\varepsilon}),
\end{equation*}
where $u^{\varepsilon}$ and $\psi^{\varepsilon}$ are defined in Lemma~\ref{L1}. 
For any $\widetilde{u} \in \bigl(L^2(\mathcal{U})\bigr)^3$, we have
\begin{equation*}
\begin{aligned}
   0 \leq& \frac{\mathcal{J}(\psi^{\varepsilon}, u^{\varepsilon}) - \mathcal{J}(\psi^*, u^*)}{\varepsilon}\\
   =& \frac{1}{\varepsilon} \bigg( \int_0^T \int_\Omega (\kappa_1 E^\varepsilon + \kappa_1 I^\varepsilon + \kappa_2 Q^\varepsilon + w_1 u_1^\varepsilon + w_2 u_2^\varepsilon + w_3 u_3^\varepsilon )\, dx \, dt\\
   %%%%%%
   &+ \int_\Omega \bigl( \kappa_3 E^\varepsilon(T,x) + \kappa_3 I^\varepsilon(T,x) + \kappa_4 Q^\varepsilon(T,x) + \sigma_1 u_1^\varepsilon(T,x) + \sigma_2 u_2^\varepsilon(T,x) + \sigma_3 u_3^\varepsilon(T,x) \bigr)\, dx\\
   %%%%
   &- \int_0^T \int_\Omega (\kappa_1 E^* + \kappa_1 I^* + \kappa_2 Q^* + w_1 u_1^* + w_2 u_2^* + w_3 u_3^*)\, dx \, dt\\
   %%%%
   &- \int_\Omega \bigl( \kappa_3 E(T,x) + \kappa_3 I(T,x) + \kappa_4 Q(T,x) + \sigma_1 u_1(T,x) + \sigma_2 u_2(T,x) + \sigma_3 u_3(T,x) \bigr)\, dx \bigg).
\end{aligned}
\end{equation*}
Afterwards,
\begin{equation}\label{E5.9}
\begin{aligned}
    0 \leq&  \int_0^T \int_\Omega (\kappa_1 Y_E^\varepsilon + \kappa_1 Y_I^\varepsilon + \kappa_2 Y_Q^\varepsilon + w_1 \widetilde{u}_1 + w_2 \widetilde{u}_2 + w_3 \widetilde{u}_3 ) \,dx \, dt\\
     &+ \int_\Omega \bigl( \kappa_3 Y_E^\varepsilon(T,x) + \kappa_3 Y_I^\varepsilon(T,x) + \kappa_4 Y_Q^\varepsilon(T,x) + \sigma_1 \widetilde{u}_1(T,x) + \sigma_2 \widetilde{u}_2(T,x) + \sigma_3 \widetilde{u}_3(T,x) \bigr)\, dx.
\end{aligned}
\end{equation}
On the one hand, it follows from Proposition~\ref{P4} that
\begin{equation*}
    Y_E^{\varepsilon} \to Y_E, \ Y_I^{\varepsilon} \to Y_I, \ Y_Q^{\varepsilon} \to Y_Q, \ 
     \text{ in } \ L^2(\mathcal{U}) \text { as } \varepsilon \to 0.
\end{equation*}
Since $L^2(\mathcal{U}) \subset L^1(\mathcal{U})$, then, we have
\begin{equation}\label{E5.10}
    Y_E^{\varepsilon} \to Y_E, \; Y_I^{\varepsilon} \to Y_I, \; Y_Q^{\varepsilon} \to Y_Q, \ \text { in } L^1(\mathcal{U}) \text { as } \varepsilon \to 0.
\end{equation}
On the other hand, since the coefficients in the system associated with $Y^\varepsilon$ are uniformly bounded, we can apply the same argument from Corollary~\ref{C1} to this linear parabolic system. 
Consequently, there exists a constant $C>0$, independent of $\varepsilon$, such that
\begin{equation}\label{E5.11}
\begin{aligned}
% & \Bigl\|\frac{\partial Y_S^\varepsilon}{\partial t}\Bigr\|_{L^2(\mathcal{U})}
% +\bigl\|Y_S^\varepsilon\bigr\|_{L^2(0, T; H^2(\Omega))}
% +\bigl\|Y_S^\varepsilon\bigr\|_{H^1(\Omega)} 
% +\bigl\|Y_S^\varepsilon\bigr\|_{L^{\infty}(\mathcal{U})} \leq C,\\
& \Bigl\|\frac{\partial Y_E^\varepsilon}{\partial t}\Bigr\|_{L^2(\mathcal{U})}
+\bigl\|Y_E^\varepsilon\bigr\|_{L^2(0, T; H^2(\Omega))}
+\bigl\|Y_E^\varepsilon\bigr\|_{H^1(\Omega)} 
+\bigl\|Y_E^\varepsilon\bigr\|_{L^{\infty}(\mathcal{U})} \leq C,\\
& \Bigl\|\frac{\partial Y_I^\varepsilon}{\partial t}\Bigr\|_{L^2(\mathcal{U})}
+\bigl\|Y_I^\varepsilon\bigr\|_{L^2(0, T; H^2(\Omega))}
+\bigl\|Y_I^\varepsilon\bigr\|_{H^1(\Omega)} 
+\bigl\|Y_I^\varepsilon\bigr\|_{L^{\infty}(\mathcal{U})} \leq C,\\
& \Bigl\|\frac{\partial Y_Q^\varepsilon}{\partial t}\Bigr\|_{L^2(\mathcal{U})}
+\bigl\|Y_Q^\varepsilon\bigr\|_{L^2(0, T; H^2(\Omega))}
+\bigl\|Y_Q^\varepsilon\bigr\|_{H^1(\Omega)} 
+\bigl\|Y_Q^\varepsilon\bigr\|_{L^{\infty}(\mathcal{U})} \leq C.
% & \Bigl\|\frac{\partial Y_R^\varepsilon}{\partial t}\Bigr\|_{L^2(\mathcal{U})}
% +\bigl\|Y_R^\varepsilon\bigr\|_{L^2(0, T; H^2(\Omega))}
% +\bigl\|Y_R^\varepsilon\bigr\|_{H^1(\Omega)} 
% +\bigl\|Y_R^\varepsilon\bigr\|_{L^{\infty}(\mathcal{U})} \leq C.\\
\end{aligned}
\end{equation}
This implies that the family of functions $\bigl\{(Y_S^{\varepsilon}, Y_E^{\varepsilon}, Y_I^{\varepsilon}, Y_Q^{\varepsilon}, Y_R^{\varepsilon} )\bigr\}_{\varepsilon}$ is equicontinuous in $\bigl(L^2(\mathcal{U})\bigr)^5$ for $t \in [0, T]$, 
with respect to the arbitrary positive number $\varepsilon$.
Furthermore, since $H^1(\Omega)$ is compactly embedded in $L^2(\Omega)$ (see, e.g.\ \cite{Martin1990}), it follows from~\eqref{E5.11} that there exists a constant $C_2 > 0$ such that $\|Y_E^{\varepsilon}\|_{L^2(\Omega)} \leq C_2$, 
$\|Y_I^{\varepsilon}\|_{L^2(\Omega)} \leq C_2$, and $\|Y_Q^{\varepsilon}\|_{L^2(\Omega)} \leq C_2$ 
for any $t\in[0, T]$ and $\varepsilon > 0$. 
Since $Y_E, Y_I, Y_Q \in W^{1,2}\bigl(0, T; L^2(\Omega)\bigr) \subset C\bigl(0, T; L^2(\Omega)\bigr)$, 
we can apply the Ascoli–Arzelà theorem together with the uniqueness of limits to conclude that
\begin{equation*}
\begin{aligned}
    & \lim _{\varepsilon \to 0} \|Y_E^{\varepsilon}(T) - Y_E(T)\|_{L^2(\Omega)} 
    \leq \lim _{\varepsilon \to 0} \max _{t \in[0, T]} \|Y_E^{\varepsilon}(t) - Y_S(t)\|_{L^2(\Omega)}=0, \\
    & \lim _{\varepsilon \to 0} \|Y_I^{\varepsilon}(T) - Y_I(T)\|_{L^2(\Omega)} 
    \leq \lim _{\varepsilon \to 0} \max _{t \in[0, T]} \|Y_I^{\varepsilon}(t) - Y_I(t) \|_{L^2(\Omega)}=0,\\
& \lim _{\varepsilon \to 0} \|Y_Q^{\varepsilon}(T) - Y_Q(T) \|_{L^2(\Omega)} 
   \leq \lim _{\varepsilon \to 0} \max _{t\in[0, T]} \|Y_Q^{\varepsilon}(t) - Y_Q(t) \|_{L^2(\Omega)}=0, 
\end{aligned}
\end{equation*}
which means that
\begin{equation}\label{E5.12}
    Y_E^{\varepsilon}(T, x) \to Y_E(T, x), \ Y_I^{\varepsilon}(T, x) \to Y_I(T, x), \ Y_Q^{\varepsilon}(T, x) \to Y_Q(T, x)  \text { in } L^1(\Omega) \ \text { as } \ \varepsilon \to 0\,.
\end{equation}
Now, combining~\eqref{E5.10} and~\eqref{E5.12} and passing to the limit as $\varepsilon \to 0$ in~\eqref{E5.9}, for any $\widetilde{u} \in \bigl(L^2(\mathcal{U})\bigr)^3$, we obtain 
\begin{equation}\label{E5.13}
\begin{aligned}
    0 \leq &\int_0^T \int_\Omega (\kappa_1 Y_E + \kappa_1 Y_I + \kappa_2 Y_Q + w_1 \widetilde{u}_1 + w_2 \widetilde{u}_2 + w_3 \widetilde{u}_3 ) \,dx dt\\
    &+ \int_\Omega \bigl( \kappa_3 Y_E(T,x) + \kappa_3 Y_I(T,x) + \kappa_4 Y_Q(T,x) + \sigma_1 \widetilde{u}_1(T,x) + \sigma_2 \widetilde{u}_2(T,x) + \sigma_3 \widetilde{u}_3(T,x) \bigr)\, dx.
\end{aligned}
\end{equation}
Multiplying the first five equations of system~\eqref{E5.13} by $Y_S^{\varepsilon}$, $Y_E^{\varepsilon}$, $Y_I^{\varepsilon}$, $Y_Q^{\varepsilon}$, and $Y_R^{\varepsilon}$, respectively, we obtain
\begin{equation}\label{E5.14}
\left\{\begin{aligned}
%%%%%%% PS
  \frac{\partial P_S}{\partial t} Y_S 
  &= -\lambda_S \Delta P_S Y_S + \bigl(\mu + u_1^* + \beta_1(1-u_3^*)E^* + \beta_2(1-u_3^*)I^* \bigr) P_S Y_S \\
& \qquad - \beta_1(1-u_3^*)E^* P_E Y_S - \beta_2(1-u_3^*)I^* P_I Y_S - u_1^* P_R Y_S, \\
%%%%%% PE
\frac{\partial P_E}{\partial t} Y_E 
&= -\lambda_E \Delta P_E Y_E + \beta_1(1-u_3^*)S^* P_S Y_E 
+ \bigl(\delta + \mu - \beta_1(1-u_3^*)S^* \bigr) P_E Y_E\\
& \qquad - \delta P_I Y_E + \kappa_1 Y_E, \\
%%%%%%% PI
\frac{\partial P_I}{\partial t} Y_I 
&= -\lambda_I \Delta P_I Y_I + \beta_2(1-u_3^*)S^* P_S Y_I 
+ \bigl(\gamma + \mu - \beta_2(1-u_3^*)S^* \bigr)P_I Y_I\\
& \qquad - \gamma P_Q Y_I + \kappa_1 Y_I, \\
%%%%%%% PQ
\frac{\partial P_Q}{\partial t} Y_Q 
&= -\lambda_Q \Delta P_Q Y_Q - \rho P_S Y_Q + (\alpha + \rho + \mu + u_2^*)P_Q Y_Q\\
& \qquad - (\alpha + u_2^*)P_R Y_Q + \kappa_2 Y_Q, \\
\frac{\partial P_R}{\partial t} Y_R &= -\lambda_R \Delta P_R Y_R + \mu P_R Y_R.
\end{aligned}\right. \quad \text{ in } \mathcal{U}, 
\end{equation}
Using the same technique, but applied to~\eqref{E5.5} and multiplying by $P_S$, $P_E$, $P_I$, $P_Q$, and $P_R$, respectively, we obtain \begin{equation}\label{E5.15}
\left\{\begin{aligned}
    \frac{\partial Y_S}{\partial t} P_S &= \lambda_S \Delta Y_S P_S - (\mu + u_1^* + \beta_1(1-u_3^*)E^* + \beta_2(1-u_3^*)I^*)Y_S P_S - \beta_1(1-u_3^*)S^* Y_E P_S  \\
    & \qquad- \beta_2(1-u_3^*)S^* Y_I P_S + \beta_1\widetilde{u}_3S^*E^*P_S + \beta_2\widetilde{u}_3S^*I^*P_S + \rho Y_Q P_S - \widetilde{u}_1S^* P_S,\\
    \frac{\partial Y_E}{\partial t} P_E &= \lambda_E \Delta Y_E P_E + \beta_1(1-u_3^*)E^* Y_S P_E 
      + \bigl[\beta_1(1-u_3^*)S^* P_E - (\delta + \mu)\bigr]Y_E P_E\\
    & \qquad- \beta_1\widetilde{u}_3S^*E^* P_E,\\
    \frac{\partial Y_I}{\partial t} P_I &= \lambda_I \Delta Y_I P_I + \beta_2(1-u_3^*)I^* Y_S P_I + \delta Y_E P_I 
      + \bigl[\beta_2(1-u_3^*)S^* - (\gamma + \mu)\bigr]Y_I  P_I\\
    & \qquad- \beta_2\widetilde{u}_3S^*I^* P_I,\\
    \frac{\partial Y_Q}{\partial t} P_Q &= \lambda_Q \Delta Y_Q P_Q + \gamma Y_I P_Q - (\alpha + \rho + \mu + u_2^*)Y_Q P_Q - \widetilde{u}_2Q^* P_Q,\\
    \frac{\partial Y_R}{\partial t} P_R &= \lambda_R \Delta Y_R P_R + \alpha Y_Q P_R + u_1^* Y_S P_R + u_2^* Y_Q P_R - \mu Y_R P_R + \widetilde{u}_1S^* P_R + \widetilde{u}_2Q^* P_R.
\end{aligned}\right. \ \text{ in } \mathcal{U}, 
\end{equation}
Adding \eqref{E5.14} and \eqref{E5.15}, we further obtain
\begin{equation}\label{E5.16}
\begin{aligned}
     \Bigl(\frac{\partial P_S}{\partial t} Y_S + \frac{\partial Y_S}{\partial t} P_S\Bigr) 
      &+ \Bigl(\frac{\partial P_E}{\partial t} Y_E + \frac{\partial Y_E}{\partial t} P_E \Bigr)
       + \Bigl(\frac{\partial P_I}{\partial t} Y_I + \frac{\partial Y_I}{\partial t} P_I \Bigr)\\
      &+ \Bigl(\frac{\partial P_Q}{\partial t} Y_Q + \frac{\partial Y_Q}{\partial t} P_Q \Bigr) 
       + \Bigl(\frac{\partial P_R}{\partial t} Y_R + \frac{\partial Y_R}{\partial t} P_R \Bigr)\\
     =& - (\lambda_S \Delta P_S Y_S + \lambda_E \Delta P_E Y_E + \lambda_I \Delta P_I Y_I + \lambda_Q \Delta P_Q Y_Q + \lambda_R \Delta P_R Y_R)\\
     &+ \lambda_S \Delta Y_S P_S + \lambda_E \Delta Y_E P_E + \lambda_I \Delta Y_I P_I + \lambda_Q \Delta Y_Q P_Q + \lambda_R \Delta Y_R P_R\\
     &+ \kappa_1 Y_E + \kappa_1 Y_I + \kappa_2 Y_Q + \beta_1\widetilde{u}_3S^*E^*P_S + \beta_2\widetilde{u}_3S^*I^*P_S - \widetilde{u}_1S^* P_S\\
     &- \beta_1\widetilde{u}_3S^*E^* P_E - \beta_2\widetilde{u}_3S^*I^* P_I - \widetilde{u}_2Q^* P_Q + \widetilde{u}_1S^* P_R + \widetilde{u}_2Q^* P_R.
\end{aligned}
\end{equation}
Combining the initial and final conditions in~\eqref{E5.5} and~\eqref{E5.13}, yields
\begin{equation*}
\begin{aligned}
&\bullet \int_0^T \int_\Omega\Bigl(\frac{\partial P_S}{\partial t} Y_S + \frac{\partial Y_S}{\partial t} P_S \Bigr)\, dx \, dt 
= \int_\Omega\bigl[P_SY_S\bigr]_0^T dx = 0,\\
%%%%%%%
&\bullet \int_0^T \int_\Omega\Bigl(\frac{\partial P_E}{\partial t} Y_E + \frac{\partial Y_E}{\partial t} P_E\Bigr) \,dx \, dt 
= \int_\Omega\bigl[P_EY_E\bigr]_0^T dx = \int_\Omega-\kappa_3 Y_E(T,x) dx,\\
%%%%%%%
&\bullet \int_0^T \int_\Omega\Bigl(\frac{\partial P_I}{\partial t} Y_I + \frac{\partial Y_I}{\partial t} P_I\Bigr) \,dx \, dt = \int_\Omega\bigl[P_IY_I\bigr]_0^T dx = \int_\Omega-\kappa_3 Y_I(T,x) dx,\\
%%%%%
&\bullet \int_0^T \int_\Omega\Bigl(\frac{\partial P_Q}{\partial t} Y_Q + \frac{\partial Y_Q}{\partial t} P_Q\Bigr) \,dx \ dt = \int_\Omega\bigl[P_QY_Q\bigr]_0^T dx = \int_\Omega-\kappa_4 Y_Q(T,x) dx,\\
%%%%%%%
&\bullet \int_0^T \int_\Omega\Bigl(\frac{\partial P_R}{\partial t} Y_R + \frac{\partial Y_R}{\partial t} P_R\Bigr) \,dx \, dt = \int_\Omega\bigl[P_RY_R\bigr]_0^T dx = 0.\\
\end{aligned}
\end{equation*}
Using the divergence theorem together with the homogeneous boundary conditions, we get
\begin{equation*}
\begin{aligned}
0 =& -\int_0^T \int_\Omega (\lambda_S \Delta P_S Y_S + \lambda_E \Delta P_E Y_E + \lambda_I \Delta P_I Y_I + \lambda_Q \Delta P_Q Y_Q + \lambda_R \Delta P_R Y_R )\, dx \, dt\\
&+ \int_0^T \int_\Omega (\lambda_S \Delta Y_S P_S + \lambda_E \Delta Y_E P_E + \lambda_I \Delta Y_I P_I + \lambda_Q \Delta Y_Q P_Q + \lambda_R \Delta Y_R P_R ) \,dx \, dt.
\end{aligned}
\end{equation*}
Thus, \eqref{E5.16} becomes
\begin{equation}\label{E5.17}
\begin{aligned}
\int_\Omega & \bigl(-\kappa_3 Y_E(T,x) - \kappa_3 Y_I(T,x) - \kappa_4 Y_Q(T,x) \bigr)\,dx\\
=& \int_0^T \int_\Omega \big(\kappa_1 Y_E + \kappa_1 Y_I + \kappa_2 Y_Q + \beta_1\widetilde{u}_3S^*E^*P_S + \beta_2\widetilde{u}_3S^*I^*P_S - \widetilde{u}_1S^* P_S\\
%%%%
&\quad - \beta_1\widetilde{u}_3S^*E^* P_E - \beta_2\widetilde{u}_3S^*I^* P_I - \widetilde{u}_2Q^* P_Q + \widetilde{u}_1S^* P_R + \widetilde{u}_2Q^* P_R \big) \,dx dt.
\end{aligned}
\end{equation}
Substituting \eqref{E5.17} into \eqref{E5.13}, we obtain
\begin{equation*}
\begin{aligned}
\int_0^T \int_\Omega & (S^* P_S - S^* P_R + w_1) \widetilde{u}_1\, dx \, dt 
+ \int_0^T \int_\Omega (Q^* P_Q - Q^* P_R + w_2) \widetilde{u}_2 \,dx \, dt\\
&+ \int_0^T \int_\Omega \bigl(\beta_1 S^*E^* (P_E - P_S) + \beta_2S^*I^* (P_I - P_S) + w_3 \bigr) \widetilde{u}_3 \,dx \, dt\\
\geq& - \int_\Omega \sigma_1 \widetilde{u}_1(T,x) \, dx - \int_\Omega  \sigma_2 \widetilde{u}_2(T,x) \, dx - \int_\Omega \sigma_3 \widetilde{u}_3(T,x) \, dx.
\end{aligned}
\end{equation*}
Recall that $\widetilde{u} \in \bigl(L^2(\mathcal{U})\bigr)^3$ is arbitrary.
Thus, by taking $\widetilde{u} = u - u^*$ for any $u \in \mathcal{V}_{ad}$, the inequality~\eqref{E5.7} is established.

Moreover, if $\sigma_1 = \sigma_2 = \sigma_3 \equiv 0$, then from~\eqref{E5.7}, 
for any $u \in \mathcal{V}_{ad}$, we have
\begin{equation*}
\begin{aligned}
\int_0^T \int_\Omega & (S^* P_S - S^* P_R + w_1) (u_1 - u_1^*) \,dx \, dt 
 + \int_0^T \int_\Omega (Q^* P_Q - Q^* P_R + w_2) (u_2 - u_2^*) \,dx \, dt\\
&+ \int_0^T \int_\Omega \bigl(\beta_1 S^*E^* (P_E - P_S) + \beta_2S^*I^* (P_I - P_S) + w_3\bigr) (u_3 - u_3^*) \,dx \, dt \geq 0.
\end{aligned}
\end{equation*}
Owing to the arbitrariness of $u \in \mathcal{V}_{ad}$,
the above inequality is equivalent to
\begin{equation}\label{E5.18}
\begin{aligned}
&\bullet \ \int_0^T \int_\Omega [S^*P_S - S^*P_R + w_1] (u_1 - u_1^*) \, dx\,dt \geq 0,\\
&\bullet \ \int_0^T \int_\Omega [Q^*P_Q - Q^*P_R + w_2] (u_2 - u_2^*) \, dx\,dt \geq 0,\\
&\bullet \ \int_0^T \int_\Omega \bigl[\beta_1S^*E^*(P_E - P_S) + \beta_2S^*I^*(P_I - P_S) + w_3\bigr] (u_3 - u_3^*) \, dx\,dt \geq 0.
\end{aligned}
\end{equation}
We consider the first inequality in \eqref{E5.18}. 
Let $\Theta_1 = \bigl\{(t, x) \in \mathcal{U} \;|\; (S^*P_S - S^*P_R + w_1)(t,x) \leq 0\bigr\}$, 
and $\Theta_2 = \bigl\{(t, x) \in \mathcal{U} \;|\; (S^*P_S - S^*P_R + w_1)(t,x) > 0\bigr\}$.
Then, we have
\begin{equation*}
\begin{aligned}
\int_0^T &\int_{\Omega} [S^*P_S - S^*P_R + w_1] (u_1 - u_1^*)(t,x) \,dx \,dt \\
&= \int_{\Theta_1} [S^*P_S - S^*P_R + w_1] (u_1 - u_1^*)(t,x) \,dx \,dt 
+ \int_{\Theta_2} [S^*P_S - S^*P_R + w_1] (u_1 - u_1^*)(t,x) \,dx \,dt.
\end{aligned}
\end{equation*}
It is easy to show that the first inequality in~\eqref{E5.18} holds for any $u \in \mathcal{U}$ if and only if
\begin{equation*}
   \int_{\Theta_1} [S^*P_S - S^*P_R + w_1] (u_1 - u_1^*) \, dx\,dt \geq 0,
\end{equation*}
and
\begin{equation*}
   \int_{\Theta_2} [S^*P_S - S^*P_R + w_1] (u_1 - u_1^*) \, dx\,dt \geq 0,
\end{equation*}
which implies
\begin{equation*}
    u_1 - u_1^* \leq 0 \ \text{ for all } (t, x) \in \Theta_1, \ \text{ and } \ u_1 - u_1^* \geq 0 \ \text{ for all } (t, x) \in \Theta_2.
\end{equation*}
Therefore, $u_1^* \equiv 1$ on $\Theta_1$, and $u_1^* \equiv 0$ on $\Theta_2$.
We finally obtain 
\begin{equation*}
\begin{aligned}
  u_1^*(t,x) &= \begin{cases}
   1, & \text{if } (S^*P_S - S^*P_R + w_1)(t,x) \leq 0,\\
   0, & \text{otherwise}.
\end{cases}
\end{aligned}
\end{equation*}
Similarly,
\begin{equation*}
\begin{aligned}
  u_2^*(t,x) &= \begin{cases}
    1, & \text{if } (Q^*P_Q - Q^*P_R + w_2)(t,x) \leq 0,\\
    0, & \text{otherwise},
\end{cases}
\end{aligned}
\end{equation*}
and
\begin{equation*}
\begin{aligned}
  u_3^*(t,x) &= \begin{cases}
    1, & \text{if } (\beta_1S^*E^*(P_E - P_S) + \beta_2S^*I^*(P_I - P_S) + w_3)(t,x) \leq 0,\\
    0, & \text{otherwise}.
\end{cases}
\end{aligned}
\end{equation*}
This completes the proof.
\end{proof}
%%%%%%%%%%%%%%%%%%%%%%%%%%%
\end{appendices}

% ================================================================ 
%\section*{Acknowledgments}
%The authors express their appreciation to the reviewers for their valuable, constructive comments, and suggestions.\\
%This work is carried out under the supervision of CNRST as part of the PASS program.

% ===================================================================
\section*{Declarations}

% =====================================
\subsection*{Data availability} 
All information analyzed or generated, which would support the results of this work are available in this article.
No data was used for the research described in the article.

% =====================================
\subsection*{Conflict of interest} 
The authors declare that there are no problems or conflicts 
of interest between them that may affect the study in this paper.

% ===================================================================

\bibliographystyle{unsrt}%acm plain}
\bibliography{References}

%\appendix
%%% Proof of Theorem 2
%\section{Proof of Theorem~\ref{T2}}

% ===================================================================
\end{document}